\documentclass [12pt,reqno]{amsart}
\usepackage{times}
\usepackage{mathtools}
\usepackage[toc,page]{appendix} 
\usepackage{bm}
\usepackage{pifont}
\numberwithin{equation}{subsection}

\allowdisplaybreaks
\usepackage{amssymb,latexsym,amsmath,amsfonts, eucal}
\usepackage[usenames,dvipsnames,svgnames,table]{xcolor}

\newtheorem{thm}{Theorem}[section]
      \newtheorem{lemma}[thm]{Lemma}

      \newtheorem{example}[thm]{Example}
      
      \newtheorem{rmk}[thm]{Remark}
      
      \numberwithin{equation}{section}
       \newtheorem{prob}[thm]{Problem}
\title [Composition operators]{ Composition operators on de Branges spaces of entire functions}

\author[Garg]{Bharti Garg$^1$}

\address{$^{1,3}$
	Department of Mathematics\\
	Indian Institute of Technology Ropar\\
	140001\\
	India}

\author[Mahapatra]{Subhankar Mahapatra$^2$}

\address{$^2$
	School of Mathematical Sciences\\
	National Institute of Science Education and Research Bhubaneswar\\
	752050\\
	India}

\author[Sarkar]{Santanu Sarkar$^3$}

\begin{document}
\subjclass[2020]{47B33, 47B32}

\keywords{composition operators, model spaces in the upper half-plane, Hardy Hilbert spaces, de Branges spaces, regular de Branges spaces\\
$1$ Email: bharti.20maz0012@iitrpr.ac.in, bhartigargfdk@gmail.com\\
$2$ Email: subhankar@niser.ac.in, subhankarmahapatra95@gmail.com\\
$3$ Corresponding author; Email: santanu@iitrpr.ac.in, santanu87@gmail.com.}

\begin{abstract}
This paper aims to study the boundedness and compactness of composition operators from model spaces to the Hardy Hilbert spaces in the upper half-plane. Consequently, we investigate the boundedness and compactness of composition operators on de Branges spaces of entire functions. Moreover, we observe that the boundedness of a composition operator on a regular de Branges space forces the inducing symbol to be affine; conversely, affine symbols under appropriate conditions yield bounded composition operators. Furthermore, we show that the behaviour of boundedness and compactness of composition operators on general de Branges spaces is different from that on the Paley-Wiener spaces.
\end{abstract}
\maketitle

\tableofcontents
\section{Introduction}\label{sec1} 

The theory of composition operators on the Hardy Hilbert spaces and model spaces on the unit disc $\mathbb{D}$ has been extensively studied; see, for instance, \cite{Cowen, Shapiro, Manhas, Nordgren} and the references therein. Mashreghi and Shabankhah in \cite{MS1, MS2} initiated the study of composition operators on model spaces in $\mathbb{D}$. Lyubarskii and Malinnikova in \cite{LM} studied compact composition operators from model spaces to the Hardy Hilbert space $H^2(\mathbb{D})$. By Littlewood's subordination principle, every analytic self map of $\mathbb{D}$ induces a bounded composition operator on the Hardy spaces. However, the situation changes when one moves from the unit disc to the half-plane setting. Matache in \cite{Matache} showed that the composition operator is bounded on $H^2(\mathbb{C}_+)$ if and only if it has a finite angular derivative at infinity. Subsequent work by Elliott and Jury in \cite{SamJury} further developed this theory, providing a detailed characterization of boundedness and compactness of composition operators on Hardy spaces in the right half-plane setting. However, the study of composition operators from model spaces to the Hardy Hilbert space $H^2(\mathbb{C}_+)$ in the upper half-plane has received less attention.

Composition operators on spaces of entire functions are also actively studied. Chacon and Gimenez in \cite{Chacon} studied composition operators on Paley-Wiener spaces and showed that the composition operator $C_{\phi}$ is bounded on such spaces if and only if $\phi$ is an affine map. Paley-Wiener spaces are a particular example of de Branges spaces. This naturally raises the question of whether a similar characterization holds in a more general setting of de Branges spaces. These spaces of entire functions were studied by de Branges in \cite{Brange} and are closely connected to the spectral theory, differential equations, and prediction theory; see, for instance \cite{DymAdv, DMcK}. Among them, regular de Branges spaces have additional structural stability and play an important role in prediction theory. Recently, Bellavita in \cite{Bellavita1, Bellavita2} investigated the boundedness of translation operators on de Branges spaces. His work focuses on a specific class of composition operators, in particular, vertical and horizontal translation operators. This leaves the question of understanding the general composition operators on such spaces.

The study of composition operators on de Branges spaces is natural from both
the operator-theoretic and function-theoretic points of view. Since de Branges
spaces are Hilbert spaces of entire functions whose structure is determined by
the Hermite-Biehler function \(E\), the boundedness of a composition operator
\(C_\phi\) on \(H(E)\) is expected to depend not only on the growth of the
entire symbol \(\phi\), but also on the interaction between \(\phi\) and the
function \(E\). Thus, composition operators provide a way to understand how
changes of variable preserve, or fail to preserve, the geometry of a de Branges
space. This is particularly relevant because de Branges spaces arise naturally
in spectral theory, differential equations and prediction theory. Therefore,
studying boundedness and compactness of composition operators on these spaces
helps clarify how the analytic and spectral structure encoded by \(E\) behaves
under composition.

 At this point, it is useful to compare the expected behaviour in the
Paley-Wiener setting with the situation in general de Branges spaces.
For Paley-Wiener spaces, Chacon and Gimenez proved that a composition
operator induced by a non-constant entire symbol is bounded if and only if
the symbol is affine of the form \(\phi(z)=az+b\), with \(a\in\mathbb R\)
and \(0<|a|\leq 1\). Thus, in that setting, boundedness is completely
determined by the particular affine form of the symbol. In the present paper, we show that this affine
rigidity persists for regular de Branges spaces: if \(H(E)\) is regular
and \(C_\phi\) is bounded for a non-constant entire symbol \(\phi\), then
\(\phi\) must be affine. However, the general de Branges setting is not
identical to the Paley-Wiener setting. Indeed, the boundedness of
\(C_\phi\) on \(H(E)\) also depends on the interaction between the symbol
\(\phi\) and the Hermite-Biehler function \(E\). In particular, we show
that an affine symbol satisfying the Paley-Wiener condition need not
induce a bounded composition operator on an arbitrary de Branges space.
Moreover, while bounded composition operators on Paley-Wiener spaces are
never compact, compact composition operators may exist on suitable
non-Paley-Wiener de Branges spaces. 

The present paper is devoted to the study of boundedness and compactness of composition operators $C_{\phi}$ from model spaces to $H^2(\mathbb{C}_+)$. Consequently, we investigate composition operators on de Branges spaces
of entire functions, with particular emphasis on entire symbols satisfying
natural boundedness conditions. Our results extend some aspects of the earlier works of Chacon and Gimenez \cite{Chacon} on Paley-Wiener spaces and of Bellavita \cite{Bellavita1, Bellavita2} on translation operators, and provide an initial framework for studying composition operators on de Branges spaces beyond the Paley-Wiener
setting.
 
The plan of the paper is as follows. A brief introduction and preliminaries required for this article are covered in Sections \ref{sec1} and \ref{sec pre}, respectively. In Section \ref{sec2}, we study composition operators from
model spaces in the upper half-plane into \(H^2(\mathbb C_+)\), obtaining
boundedness and non-compactness results. In Section \ref{sec3}, we transfer these
ideas to de Branges spaces and provide examples which distinguish general
de Branges spaces from Paley-Wiener spaces. In Section \ref{sec4}, using techniques
inspired by Chacon and Gimenez \cite{Chacon}, we prove an affine rigidity
result for regular de Branges spaces and give sufficient conditions under
which affine symbols induce bounded composition operators.\\
The following notations will be used throughout this paper: 
\begin{itemize} 
 \item $\Im(z)$ and $\Re(z)$ denote the imaginary and real parts of the complex number $z$, respectively.
 \item $\rho_\xi(z)= -2 \pi i (z-\bar{\xi})$.
 \item $\ominus$ denotes the orthogonal complement.
\item $\mathbb{N}$, $\mathbb{R}$, $\mathbb{C},$ $\mathbb{C}_+$, and $\mathbb{C}_-$ denote the set of natural numbers, the real line, the complex plane, the open upper half-plane, and the open lower half- plane, respectively.

\item $\Vert \cdot \Vert_e$ denotes the essential norm of a bounded operator in a Hilbert space, that is, for any bounded operator $A$ in a Hilbert space $\mathcal{H}$,
$$\Vert A \Vert_e=\inf\{||A-Q||:~Q~\mbox{is compact in}~\mathcal{H}\}.$$

\item $f^{\#}(z):= \overline{f(\bar{z})}$.
\item $R_z$ denotes the generalized backward shift operator defined by \begin{equation}
    (R_z g)(\xi) := \left\{
   \begin{array}{ll}
         \frac{g(\xi)-g(z)}{\xi-z}  & \mbox{if }~ \xi \neq z \vspace{0.1cm} \\ 
        g'(z) & \mbox{if }~ \xi = z
   \end{array} \right.
\end{equation}
for every $z,~ \xi \in \mathbb{C}$.
\end{itemize}

\section{Preliminaries} \label{sec pre}
In this section, we recall basic definitions and results that we shall use in this paper. Let $E$ be an entire function from the Hermite-Biehler class $\mathcal{H}B$, that is, $E$ satisfies the following inequality: $$\vert E(\bar{z})\vert < \vert E(z) \vert~ \text{ for all}~z \in \mathbb{C}_+.$$
Then the de Branges space of entire functions corresponding to $E \in \mathcal{H}B$ is defined as follows:
$$H(E):= \{f~\text{entire}: \frac{f}{E}, \frac{f^{\#}}{E}\in H^2(\mathbb{C}_+) \},$$
where $H^2(\mathbb{C}_+)$ is the Hardy Hilbert space on the upper half-plane. The de Branges space $H(E)$ is endowed with the following inner product:
$$\langle f,g \rangle_{H(E)}= \langle \frac{f}{E}, \frac{g}{E} \rangle_{H^2(\mathbb{C}_+)}= \int_{-\infty}^{\infty}  f(t) \overline{g(t)}\frac{1}{\vert E(t) \vert^2}  dt.$$
The space $H(E)$ is a reproducing kernel Hilbert space corresponding to the reproducing kernel $$K_w(z):= \left\{
    \begin{array}{ll}
         \frac{ E(z)\overline{E(w)}- \overline{E(\bar{z})}E(\bar{w})}{-2 \pi i(z-\bar{w})}  & \text{if } z \neq \bar{w} \\ 
         \frac{E^{'} (\bar{w})\overline{E(w)}- E(\bar{w})\overline{E(w)}^{'}}{-2\pi i} & \text{if } z = \bar{w}.
    \end{array} \right.$$
For more details on de Branges spaces of entire functions, we refer to \cite{Brange}. Now, we recall some classical spaces of analytic functions. $H^{\infty}(\mathbb{C}_+)$ denotes the set of all bounded analytic functions on $\mathbb{C}_+$. $N(\mathbb{C}_+)$ denotes the set of analytic functions on $\mathbb{C}_+$ which can be represented as the quotient of two bounded analytic functions and referred to as functions of bounded type. If $f,g \in N(\mathbb{C}_+)$, then $f+g, fg \in N(\mathbb{C}_+)$. Moreover, if $g \not\equiv 0 $, and $\frac{f}{g}$ is holomorphic in $\mathbb{C}_+$ then $\frac{f}{g} \in N(\mathbb{C}_+).$ For $f \in N(\mathbb{C}_+)$, the mean type of $f$ is given as follows:
$$\mathrm{mt}(f):= \limsup_{y \rightarrow \infty} \frac{1}{y} \log \vert f(iy) \vert .$$
For any $f,g \in N(\mathbb{C}_+)$, $\mathrm{mt}(fg)= \mathrm{mt}(f)+\mathrm{mt}(g)$.
$N^+(\mathbb{C}_+)$ denotes the space of functions of bounded type such that in the inner-outer factorization of $f$, there is no singular function in the denominator. If $f,g \in N^+(\mathbb{C}_+)$, then $f+g, fg \in N^+(\mathbb{C}_+)$. If $f \in N^+(\mathbb{C}_+)$, then $\mathrm{mt}(f)$ is non-positive. Conversely, if $f \in N(\mathbb{C}_+)$ such that $f$ is of non-positive mean type and $f$ has continuous extension to the real axis, then $f \in N^+(\mathbb{C}_+)$. Indeed, every function \(f\in N(\mathbb C_{+})\) admits a Nevanlinna
factorization of the form
\[
    f(z)=e^{-iaz} O(z)B(z)\frac{S_{1}(z)}{S_{2}(z)},
\]
where \(a\) is the mean type of \(f\), \(O\) is an outer function, \(B\) is a Blaschke factor, and \(S_{1}\) and \(S_{2}\) are singular inner functions (see \cite[Theorem~6.13]{RR}). The continuous extension of \(f\) to the real axis eliminates the finite singular measure in the denominator of the Nevanlinna factorization, while the non-positive mean type eliminates the denominator exponential factor. This follows from the Stieltjes inversion formula (see \cite[Theorem~5.4]{RR}) and the Poisson representation (see \cite[Theorem~5.2]{RR}). This result is also discussed by Baranov et al. and Abakumov et al. in \cite{Baranov} and \cite{Abakumov}, respectively. 

The spaces satisfy the following inclusions:
$$H^{\infty}(\mathbb{C}_+), H^2(\mathbb{C}_+) \subset N^+(\mathbb{C}_+) \subset N(\mathbb{C}_+).$$ For more details on these spaces and proof of these results, see \cite[Chapter 5-6]{RR}.

Equivalently, an entire function $f \in H(E)$ if and only if \begin{itemize}
\item[(1)] $\frac{f}{E}$ and $\frac{f^{\#}}{E}$ are of bounded type
and non-positive mean type,
\item[(2)] $\int_{-\infty}^{\infty} \big\vert  \frac{f(t)}{E(t)}\big \vert^2 dt < \infty.$
\end{itemize}
Yet another equivalent axiomatic definition of de Branges spaces is defined as follows \cite[Theorem 23]{Brange}:
A reproducing kernel Hilbert space $H$ of entire functions is called a de Branges space if it satisfies the following two conditions:
\begin{itemize}
\item[(1)] If $f \in H$, then $f^{\#} \in H$ and $\langle f^{\#}, g^{\#}\rangle= \langle f,g \rangle$ for all $f,g \in H$.
\item[(2)] If $w \in \mathbb{C}\setminus \mathbb{R}$ and $ f \in H$ such that $f(w)=0$, then $\frac{z- \bar{w}}{z-w}f(z) \in H$ and $\langle \frac{z- \bar{w}}{z-w}f(z), \frac{z- \bar{w}}{z-w}g(z)\rangle = \langle f,g \rangle$ for all $f, g \in H$ such that $f(w)=0=g(w)$.
\end{itemize}

The de Branges space $H(E)$ is said to be regular (or short) if $H(E)$ is closed under the map $R_{\alpha}$ for every complex number $\alpha$. For more details, see \cite[Section 6.2]{DMcK}. Recall that an analytic function $\chi$ in $\mathbb{C}_+$ is said to be an inner function if $\vert \chi(z) \vert \leq 1$ for all $z \in \mathbb{C}_+$ and $\vert \chi(x) \vert =1$ a.e. on $\mathbb{R}$. Corresponding to this inner function $\chi$, the model space $H(\chi)$ is defined as follows: $H(\chi):= H^2(\mathbb{C}_+) \ominus \chi H^2(\mathbb{C}_+).$
The following theorem provides the correspondence between de Branges spaces of entire functions and model spaces.
\begin{thm} \cite[Theorem 2.1]{Baranov}\label{thmD}
Let $E$ be an entire function in the class $\mathcal{H}B$. Then the map $f \mapsto \frac{f}{E} $ is a unitary operator from $H(E)$ onto $H(\chi)$, where $\chi= \frac{E^{\#}}{E}$.
\end{thm}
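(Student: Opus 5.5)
The map $U\colon f\mapsto f/E$ is an isometry by the very definition of the inner product on $H(E)$. Indeed, $\langle f,g\rangle_{H(E)}=\langle f/E,g/E\rangle_{H^2(\mathbb{C}_+)}$, and $f/E\in H^2(\mathbb{C}_+)$ for $f\in H(E)$. It is also linear, so the proof reduces to two inclusions: $U(H(E))\subseteq H(\chi)$ and $H(\chi)\subseteq U(H(E))$.

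First, note that $\chi=E^{\#}/E$ is inner. Since $E\in\mathcal{H}B$, $E$ has no zeros in $\mathbb{C}_+$, because $|E(z)|>|E(\bar z)|\ge 0$ there. Thus $\chi$ is analytic in $\mathbb{C}_+$ with $|\chi(z)|=|E(\bar z)|/|E(z)|<1$. On $\mathbb{R}$ we have $|E^{\#}(t)|=|E(t)|$, so $|\chi|=1$ wherever $E(t)\neq 0$, i.e.\ a.e.

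For the first inclusion, take $f\in H(E)$ and $h\in H^2(\mathbb{C}_+)$. On $\mathbb{R}$,
\[
\Big\langle \frac{f}{E},\chi h\Big\rangle=\int_{\mathbb{R}}\frac{f(t)}{E(t)}\,\frac{\overline{E^{\#}(t)}}{\overline{E(t)}}\,\overline{h(t)}\,dt=\int_{\mathbb{R}}\frac{f(t)}{E(t)}\,\frac{E(t)}{\overline{E(t)}}\,\overline{h(t)}\,dt .
\]
This uses $\overline{E^{\#}(t)}=E(t)$ for real $t$. Rewriting $\frac{f(t)}{\overline{E(t)}}=\overline{\Big(\frac{f^{\#}(t)}{E(t)}\Big)}$, since $\overline{f(t)}=f^{\#}(t)$ on $\mathbb{R}$, gives
\[
\Big\langle \frac{f}{E},\chi h\Big\rangle=\int_{\mathbb{R}}\overline{\Big(\frac{f^{\#}}{E}\Big)(t)\,h(t)}\,dt .
\]
Here $g=f^{\#}/E\in H^2(\mathbb{C}_+)$, so $gh\in H^1(\mathbb{C}_+)$, and the integral of an $H^1(\mathbb{C}_+)$ function over $\mathbb{R}$ vanishes. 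Hence $f/E\perp\chi H^2(\mathbb{C}_+)$.

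For the reverse inclusion, take $F\in H(\chi)$ and set $f=EF$. Then $f$ is analytic in $\mathbb{C}_+$ and $f/E=F\in H^2(\mathbb{C}_+)$. Running the computation above backwards, $F\perp\chi H^2(\mathbb{C}_+)$ means $\overline{\chi F}$ is orthogonal to $H^2(\mathbb{C}_+)$ in $L^2(\mathbb{R})$. So $\overline{\chi F}\in H^2(\mathbb{C}_+)$ on the boundary, i.e.\ $\chi\bar F=G$ a.e.\ on $\mathbb{R}$ for some $G\in H^2(\mathbb{C}_+)$. On $\mathbb{R}$ this reads $\overline{f}/E=G$.

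\textbf{Main obstacle:} showing that $f$ extends to an entire function. I would define $f$ on $\mathbb{C}_-$ by $f(z)=\overline{E(\bar z)\,G(\bar z)}=(EG)^{\#}(z)$. The two definitions, $EF$ on $\mathbb{C}_+$ and $(EG)^{\#}$ on $\mathbb{C}_-$, have boundary values agreeing a.e.\ on $\mathbb{R}$. Each is locally in the Smirnov/$H^1$ class near the real line, because $E$ is entire and $F,G\in H^2$.

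I would then glue them across $\mathbb{R}$. The plan is to use the Cauchy integral representation of $H^2$ functions on bounded rectangles straddling $\mathbb{R}$, or equivalently a Morera-type theorem for functions with matching $L^1_{\mathrm{loc}}$ nontangential boundary values. This yields an entire $f$, with the possible zeros of $E$ on $\mathbb{R}$ handled by this local integrability.

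Finally, $f^{\#}/E=G\in H^2(\mathbb{C}_+)$ and $f/E=F\in H^2(\mathbb{C}_+)$, so $f\in H(E)$ and $Uf=F$. Thus $U$ is onto $H(\chi)$, and being an isometry, it is unitary.
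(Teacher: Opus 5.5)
The paper does not prove this statement: it is quoted from \cite[Theorem 2.1]{Baranov} and used as a black box, so there is no internal argument to compare with. Your proof is correct and follows the standard direct route. The isometry is built into the norm of $H(E)$. The inclusion $f/E\perp\chi H^2(\mathbb{C}_+)$ follows from $\overline{\chi}=E/\overline{E}$ on $\mathbb{R}$ and $\int_{\mathbb{R}}u\,dt=0$ for $u\in H^1(\mathbb{C}_+)$. Surjectivity comes from gluing $EF$ on $\mathbb{C}_+$ to $(EG)^{\#}$ on $\mathbb{C}_-$, after which $f^{\#}/E=G$ is immediate.

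Two points should be tidied.

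\emph{The orthogonality step is misstated.} $F\perp\chi H^2(\mathbb{C}_+)$ means $\overline{\chi}F\perp H^2(\mathbb{C}_+)$. Since $L^2(\mathbb{R})=H^2(\mathbb{C}_+)\oplus\overline{H^2(\mathbb{C}_+)}$, this gives $\overline{\chi}F\in\overline{H^2(\mathbb{C}_+)}$, i.e.\ $\chi\overline{F}\in H^2(\mathbb{C}_+)$. Your conclusion $\chi\overline{F}=G$ is right, but the intermediate sentence (``$\overline{\chi F}\perp H^2$, so $\overline{\chi F}\in H^2$'') is not.

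\emph{The gluing step needs more than a.e.\ agreement.} Agreement of nontangential limits a.e.\ is not what makes the argument work: $e^{-i/z}$ is analytic off $\mathbb{R}$, has equal unimodular boundary values from both sides at every $x\neq0$, and is not entire. The rectangle/Cauchy-integral argument needs two facts, which you should write out.
\begin{itemize}
\item $f(\cdot\pm i\varepsilon)\to EF$ in $L^1[a,b]$ as $\varepsilon\to0^+$. This follows from $F(\cdot+i\varepsilon)\to F$ and $G(\cdot+i\varepsilon)\to G$ in $L^2(\mathbb{R})$, the identity $\overline{EG}=EF$ a.e.\ on $\mathbb{R}$, and the continuity of $E$.
\item $f$ is integrable on the vertical sides of rectangles crossing $\mathbb{R}$. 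This follows from $|F(x+iy)|\le\|F\|_2(4\pi|y|)^{-1/2}$ and the same bound for $G$.
\end{itemize}
With these two facts, your ``locally $H^1$'' gluing is complete.

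\emph{A shortcut for surjectivity.} You can avoid boundary-value analysis entirely by using reproducing kernels. Let $k^{\chi}_w(z)=\frac{1-\overline{\chi(w)}\chi(z)}{-2\pi i(z-\bar w)}$. The explicit formula for $K_w$ gives $K_w/E=\overline{E(w)}\,k^{\chi}_w$ and $K_w^{\#}/E=E(w)\frac{\chi(z)-\chi(w)}{2\pi i(z-w)}$, and both lie in $H^2(\mathbb{C}_+)$ for $w\in\mathbb{C}_+$. Hence $K_w\in H(E)$. The range of the isometry $f\mapsto f/E$ is closed and lies in $H(\chi)$ by your first inclusion. It contains every reproducing kernel of $H(\chi)$, so it equals $H(\chi)$. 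This route is shorter; yours has the advantage of exhibiting the preimage $EF$ of an arbitrary $F\in H(\chi)$ explicitly.
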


Next, we recall the definition of meromorphic inner functions. An inner function $\Theta$ is said to be meromorphic if $\Theta$ coincides in $\mathbb{C}_+$ with a meromorphic function whose poles are in $\mathbb{C}_-$. Any meromorphic function $\Theta$ can be represented by $\Theta(z)= \frac{E^{\#}(z)}{E(z)}$ with a suitable entire function $E(z)$ in the class $\mathcal{H}B$ and having zeros only in the lower half-plane. Moreover, the following theorem provides another characterization of such functions.
\begin{thm}\cite[Lemma 2.1]{Baranov}\label{thmC}
Let $\chi$ be an inner function in $\mathbb{C}_+$. Then the following are equivalent:
\begin{itemize}
\item[(1)] $\chi=\frac{E^{\#}}{E}$, where $E$ is an entire function in the class $\mathcal{H}B$.
\item[(2)] $\chi(z)=B(z) \exp(i \alpha z)$, $z \in \mathbb{C}_+, $ where $\alpha\geq 0$ and $B$ is a Blaschke product such that the sequence of its zeros have no limit point in $\mathbb{C}$.
\end{itemize}
\end{thm}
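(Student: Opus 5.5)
The plan is to prove the two implications separately. For (1)$\Rightarrow$(2), let $\chi=E^{\#}/E$ with $E\in\mathcal{H}B$. Then $\chi$ is analytic in $\mathbb{C}_+$, because $E$ has no zeros there: if $E(z)=0$ for some $z\in\mathbb{C}_+$, then $|E(\bar z)|<|E(z)|=0$, which is impossible. Moreover $\chi$ is meromorphic in $\mathbb{C}$, with poles only at the zeros of $E$, and these lie in $\overline{\mathbb{C}_-}$. Real zeros of $E$ are common zeros of $E$ and $E^{\#}$, so they cancel in the quotient.

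The zeros of $\chi$ in $\mathbb{C}_+$ are the zeros of $E^{\#}$ there, i.e.\ the conjugates of the zeros of $E$ in $\mathbb{C}_-$, with multiplicities. Since $E$ is entire, this zero set has no finite accumulation point. Write the inner–outer/canonical factorization $\chi=c\,B\,S\,e^{i\alpha z}$, where $B$ is the Blaschke product over these zeros and $S$ is a singular inner function with singular measure $\mu$ on $\mathbb{R}$. The key point is that $\chi$ extends analytically across every point of $\mathbb{R}$ (real zeros of $E$ cancel) and is unimodular on $\mathbb{R}$. A nontrivial singular measure would force $\chi$ to have nontangential limit $0$ at $\mu$-a.e.\ point of its support, or at least to fail to extend analytically with modulus one near the support. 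Hence $\mu=0$, and $\chi=cBe^{i\alpha z}$ with $\alpha\ge 0$. The constant $c$ can be absorbed into $B$, since Blaschke products are normalized only up to a unimodular constant.

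For (2)$\Rightarrow$(1), let $\chi=Be^{i\alpha z}$ with zeros $\{z_n\}\subset\mathbb{C}_+$ having no finite limit point. By Weierstrass, choose an entire function $F$ whose zeros are exactly the $\bar z_n$, with multiplicity. Then set $E(z)=F(z)e^{-i\alpha z/2}\cdot h(z)$, where the factor $h$ is a real entire function (so that $h^{\#}=h$) chosen to make $E^{\#}/E=\chi$ exactly.

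To see that this can be arranged, note that $\Phi:=\chi F/F^{\#}$ is analytic and zero-free in $\mathbb{C}_+$, because the zeros of $F^{\#}$ are the $z_n$ and they cancel against those of $B$. The same holds in $\mathbb{C}_-$ by reflection, using $\chi(z)\overline{\chi(\bar z)}=1$, which extends $\chi$ meromorphically to $\mathbb{C}$. Across $\mathbb{R}$, $\chi$ is analytic and unimodular, because the zeros of $B$ do not accumulate at finite points. Thus $\Phi$ is entire and zero-free, so $\Phi=e^{g}$ for some entire $g$. Also $|\Phi|=1$ on $\mathbb{R}$, so $\Re g=0$ there and $g=i\,g_0$ with $g_0$ real entire. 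Taking $E=F e^{-ig_0/2}$ gives $E^{\#}/E=F^{\#}e^{ig_0/2}/(Fe^{-ig_0/2})=\chi$.

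Finally, $E\in\mathcal{H}B$, since $|E^{\#}(z)/E(z)|=|\chi(z)|<1$ in $\mathbb{C}_+$. The inequality is strict: it holds unless $\chi$ is a unimodular constant, which is a degenerate case to be excluded or handled by convention.

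The main obstacle is the regularity step in each direction: excluding the singular inner factor in (1)$\Rightarrow$(2), and showing in (2)$\Rightarrow$(1) that $\chi$ continues analytically across $\mathbb{R}$. For the latter I would verify directly that the Blaschke product converges locally uniformly near each real point, since the zeros stay a positive distance away from any bounded real segment.
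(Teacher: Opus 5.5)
The paper does not prove this statement; it quotes it from Baranov's Lemma~2.1. So there is no internal argument to compare with, and your proposal supplies a correct, self-contained proof of both implications.

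In (1)$\Rightarrow$(2), cancelling the real zeros of $E$ against the equal-order zeros of $E^{\#}$ makes $\chi$ analytic and unimodular across $\mathbb{R}$. The singular factor is then excluded by the standard fact that a singular inner function with measure $\mu\neq 0$ has nontangential limit $0$ at $\mu$-a.e.\ point. A slightly cleaner equivalent: $-\log|\chi|$ dominates the Poisson integral of $\mu$ and tends to $0$ locally uniformly as $y\to 0^+$. Since Poisson integrals converge vaguely to $\mu$, this forces $\mu=0$.

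In (2)$\Rightarrow$(1), the device $\Phi=\chi F/F^{\#}=e^{ig_0}$ with $g_0$ real entire and $E=Fe^{-ig_0/2}$ is exactly right. It also produces an $E$ with zeros only in $\mathbb{C}_-$, which is the normalization the paper mentions just before the theorem.

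A few points to tidy:

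\begin{itemize}
\item Delete the provisional sentence proposing $E=Fe^{-i\alpha z/2}h$ with $h$ real entire. A real entire $h$ has $h^{\#}/h=1$ and cannot correct $E^{\#}/E$. The factor $e^{-ig_0/2}$ you actually use is not real, and it already absorbs $e^{i\alpha z}$.
\item You must verify that $B$ continues analytically across $\mathbb{R}$ before invoking the reflection identity $\chi\chi^{\#}=1$. For this, use the normalized half-plane Blaschke factors, or transfer to the disc by the Cayley map, where the zeros accumulate only at the image of $\infty$. The bare product $\prod_n (z-z_n)/(z-\bar z_n)$ can diverge under the Blaschke condition alone, e.g.\ for $z_n=n+i$.
\item The degenerate case you flag is a genuine exception, not a matter of convention. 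With the strict inequality defining $\mathcal{H}B$ in this paper, a unimodular constant satisfies (2) (empty $B$, $\alpha=0$) but not (1). So the equivalence as stated requires $\chi$ to be nonconstant.
\end{itemize}
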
 For more details on meromorphic inner functions, we refer to \cite{Javad}. Now, we recall the definitions of order and type of an entire function. Let $f$ be an entire function. Define $M_f(r):= \max_{\vert z \vert=r} \vert f(z) \vert$. Then, $f$ is said to be of order $\alpha$ if $$\limsup_{r \rightarrow \infty} \frac{\log \log M_f(r)}{\log r}= \alpha.$$ If the entire function $f$ is of positive finite order $\alpha$, then it is said to be of type $\beta$ if $$\limsup_{r \rightarrow \infty} \frac{\log M_f(r)}{r ^{\alpha}}= \beta.$$ An entire function $f$ is said to be of exponential type $\beta$ if it is of order one and has finite positive type $\beta$; and then we write $\mathrm{ET}(f)= \beta$. For more details on the order and type of entire functions, we refer to \cite{Boas, Levin}. Next, we recall the Polya theorem that we shall use in Section \ref{sec4}.
 
\begin{thm}\cite{Polya}\label{polya} Let $f$ and $g$ be entire functions such that $f \circ \phi$ is of finite order. Then exactly one of the following two conditions hold:
\begin{itemize}
\item[(1)] $f$ is of finite order and $g$ is a polynomial.
\item[(2)] $f$ is of order $0$ and $g$ is not a polynomial.  
\end{itemize}
\end{thm}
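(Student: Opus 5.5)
The statement is Pólya's classical theorem, applied in Section \ref{sec4} with $g=\phi$ and $f$ suitably related to $E$; in it the symbol $\phi$ in the hypothesis should read $g$, so the hypothesis is that $f\circ g$ is of finite order. My plan is to derive both alternatives from a single growth comparison between $M_{f\circ g}$ and $M_f\circ M_g$, namely \emph{Pólya's lemma}. After a translation we may assume $g(0)=0$. The lemma says there is an absolute constant $c\in(0,1)$ (one can take $c=1/8$) such that, for all sufficiently large $r$,
\[
M_{f\circ g}(r)\;\ge\; M_f\bigl(c\,M_g(r/2)\bigr).
\]
We also assume $f$ and $g$ are non-constant. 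If either is constant, $f\circ g$ is constant and the dichotomy is trivial or degenerate, so I would note this and exclude it.

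\textbf{Step 1 (the lemma).} The proof reduces to showing that $g(\{|z|<r\})$ contains the entire circle $\{|w|=s\}$ with $s=c\,M_g(r/2)$. Granting this, the maximum principle gives $M_{f\circ g}(r)\ge \max_{|w|=s}|f(w)|=M_f(s)$.

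To prove the covering claim I would argue by contradiction. Suppose a point $w_0$ with $|w_0|=s$ is omitted by $g$ on $|z|<r$. Then $\log|g-w_0|$ is harmonic there. A harmonic-measure (or Schottky/Hadamard-type) estimate compares $\log|g(z)-w_0|$ on $|z|=r/2$ with its value at $0$, which is $\log|w_0|=\log s$. This yields $M_g(r/2)\le C s$ for an absolute constant $C$, which contradicts the choice of $c<1/C$. I expect this step to be the main obstacle. The cleanest route I would try first is the standard one from Pólya's original paper: apply Harnack's inequality to the positive harmonic function $\log\bigl(2M_g(r)/|g-w_0|\bigr)$ on the disc $|z|<r$.

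\textbf{Step 2 ($g$ not a polynomial forces order $0$).} If $g$ is transcendental, then $\log M_g(r)/\log r\to\infty$. Suppose $f$ had order $\rho>0$. Choose $s_n\to\infty$ with $\log M_f(s_n)\ge s_n^{\rho/2}$. Since $r\mapsto c\,M_g(r/2)$ is continuous, increasing and unbounded, we can pick $r_n\to\infty$ with $c\,M_g(r_n/2)=s_n$. Step 1 then gives
\[
\log\log M_{f\circ g}(r_n)\ge \tfrac{\rho}{2}\,\log\bigl(c\,M_g(r_n/2)\bigr),
\]
and dividing by $\log r_n$ the right side tends to $\infty$. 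Hence $f\circ g$ would have infinite order, a contradiction. So $f$ has order $0$, which is alternative (2).

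\textbf{Step 3 ($g$ a polynomial forces finite order).} If $g$ is a polynomial of degree $d\ge1$, then $M_g(r/2)\ge c' r^d$ for large $r$. Step 1 and the monotonicity of $M_f$ give $M_f(c''r^d)\le M_{f\circ g}(r)$. Substituting $t=c''r^d$ shows that the order of $f$ is at most (order of $f\circ g$)$/d<\infty$, which is alternative (1).

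\textbf{Exclusivity.} The two alternatives cannot hold simultaneously, since $g$ is either a polynomial or not.
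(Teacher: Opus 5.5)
The paper gives no proof of this statement; it is quoted from P\'olya's paper, so I compare your proposal with the classical argument. Several parts of your proposal are fine:
\begin{itemize}
\item reading the hypothesis as $f\circ g$ rather than $f\circ\phi$;
\item excluding constant $g$ (for which the statement is actually false when $f$ has infinite order);
\item Steps 2 and 3, since granted $M_{f\circ g}(r)\ge M_f(c\,M_g(r/2))$ the dichotomy follows as you write.
\end{itemize}

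\textbf{The gap is Step 1, and it is not merely technical.} Your contradiction uses only $g(0)=0$ and the omission of one value $w_0$ with $|w_0|=s$ in $|z|<r$. That information cannot bound $M_g(r/2)$ by $Cs$. For example, $g(z)=w_0(1-e^{z})$ vanishes at $0$, omits $w_0$ in the whole plane, and has $M_g(r/2)\ge |w_0|(e^{r/2}-1)$.

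Your Harnack route shows the same failure. On $|z|=r/2$, $u=\log(2M_g(r)/|g-w_0|)$ satisfies $u\ge u(0)/3$, which gives only
\[
M_g(r/2)\le s+(2M_g(r))^{2/3}s^{1/3}.
\]
With $s=c\,M_g(r/2)$ this becomes $M_g(r/2)\le 2c^{1/2}(1-c)^{-3/2}M_g(r)$. For transcendental $g$ this holds automatically at large $r$, because $M_g(r/2)/M_g(r)\to 0$. That is exactly the case Step 2 needs, so no contradiction arises. Schottky's theorem would require two omitted values, and a three-circles estimate has the same defect of involving $M_g(r)$.

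\textbf{The missing idea is to use the whole image} $\Omega=g(\{|z|<r\})$ rather than one omitted point.
\begin{itemize}
\item Let $G$ be $\Omega$ together with the bounded components of $\mathbb{C}\setminus\Omega$. Then $G$ is a bounded simply connected domain containing $0$, and $\partial G\subset\partial\Omega\subset g(\{|z|\le r\})$.
\item Let $F$ map $\{|z|<r\}$ conformally onto $G$ with $F(0)=0$. Schwarz's lemma applied to $F^{-1}\circ g$ gives $M_g(r/2)\le M_F(r/2)$.
\item Koebe's growth theorem gives $M_F(r/2)\le 2r|F'(0)|$. The one-quarter theorem gives $G\supset\{|w|<r|F'(0)|/4\}\supset\{|w|<M_g(r/2)/8\}$.
\item The maximum principle for $f$ on $G$ yields $M_f(s)\le\max_{\partial G}|f|\le M_{f\circ g}(r)$ for every $s<M_g(r/2)/8$. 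Hence $M_{f\circ g}(r)\ge M_f(M_g(r/2)/8)$ for all $r>0$.
\end{itemize}
This is precisely the inequality your Steps 2 and 3 consume, with $c=1/8$. It does not require a full circle $\{|w|=s\}$ to lie in $\Omega$, since holes of $\Omega$ are allowed; that circle claim is stronger than you need. For Step 3 alone you can bypass the lemma: by Rouch\'e's theorem, a polynomial $g$ covers the disc $\{|w|<\min_{|z|=r}|g(z)|\}$.
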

Let us recall the following theorem by M. G. Krein that we shall use later.
\begin{thm}\label{thmA} \cite[Theorem 6.17]{RR}
Let $f(z)$ be an entire function. Then the following are equivalent:
\begin{itemize}
\item [(1)] $f$ is of exponential type and  $$\int_{-\infty}^{\infty} \frac{\log^+ \vert f(t)\vert}{1+t^2}dt < \infty.$$
\item[(2)] The restrictions of $f(z)$ and $f^{\#}(z)$ to the upper half-plane belong to $N(\mathbb{C}_+).$
\end{itemize}
\end{thm}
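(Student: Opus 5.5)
This is Krein's theorem, cited from \cite[Theorem 6.17]{RR}. I will prove the two implications separately: (2)$\Rightarrow$(1) through the Nevanlinna factorization, and (1)$\Rightarrow$(2) through a Poisson-majorant argument.

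\emph{Step 1: (2)$\Rightarrow$(1), exponential type.} Assume $f|_{\mathbb C_+}$ and $f^{\#}|_{\mathbb C_+}$ lie in $N(\mathbb C_+)$. Since $f$ is entire, it extends continuously to $\mathbb R$. The Nevanlinna factorization recalled in Section~\ref{sec pre} then gives $f=e^{-iaz}O B S_1/S_2$ with $a=\mathrm{mt}(f)$. For an entire $f$, the boundary continuity should exclude the singular denominator $S_2$, so
\[
\log|f(z)|\le -a\,\Im z+\frac{\Im z}{\pi}\int_{-\infty}^{\infty}\frac{\log|f(t)|}{(t-x)^2+(\Im z)^2}\,dt
\]
in $\mathbb C_+$. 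The same bound holds for $f^{\#}$, which controls $f$ in $\mathbb C_-$. In particular, $\log^+|f(t)|/(1+t^2)$ is integrable, since the outer factor requires $\log|f(t)|\in L^1(dt/(1+t^2))$. This gives the integral condition.

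The Poisson integral bound only gives $\log|f(z)|\le C|z|$ away from the real axis, and it blows up as $\Im z\to 0$. To close that gap, I will apply Phragmén–Lindelöf in the sectors $\{\epsilon<\arg z<\pi-\epsilon\}$ and their reflections, together with an estimate near the real line. For the latter, I will use the Poisson majorant in the strip $|\Im z|\le 1$, combined with the standard bound on harmonic measure that appears in Krein's proof. This yields $\log M_f(r)=O(r)$, so $f$ is of exponential type.

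\emph{Step 2: (1)$\Rightarrow$(2).} Let $f$ be of exponential type $\tau$ with $\int \log^+|f(t)|/(1+t^2)\,dt<\infty$. Let $u(z)$ be the Poisson integral of $\log^+|f(t)|$, which is finite by hypothesis. Consider
\[
v(z)=\log|f(z)|-u(z)-\tau\,\Im z,
\]
which is subharmonic in $\mathbb C_+$. It is bounded above by $O(|z|)$ because $f$ has exponential type. On $\mathbb R$ we have $\limsup v\le 0$ since $\log|f|\le\log^+|f|$. On the imaginary axis, $v(iy)\le 0$ should follow from Phragmén–Lindelöf applied in the quarter-planes. The Phragmén–Lindelöf theorem for the half-plane (functions of order $1$, minimal type relative to $\Im z$) then gives $v\le 0$. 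Hence $|f(z)|\le e^{\tau\Im z+u(z)}$.

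Next, $e^{u+i\tilde u}$ is an outer function, and $1/e^{u+i\tilde u}$ is bounded because $u\ge 0$. So $f(z)e^{i\tau z}e^{-(u+i\tilde u)}$ is bounded analytic in $\mathbb C_+$. Therefore $f$ is a quotient of bounded analytic functions, i.e.\ $f\in N(\mathbb C_+)$. Applying the same argument to $f^{\#}$, which satisfies the same hypotheses, gives $f^{\#}\in N(\mathbb C_+)$.

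\emph{Main obstacle.} The delicate step is the Phragmén–Lindelöf argument in Step 2. One has to justify that $\log|f|-u$ grows at most like $C\,\Im z$, and not merely like $C|z|$, near the real axis. I would handle this in two stages. First, I apply Phragmén–Lindelöf to $f(z)e^{i\tau z}$ divided by the outer function in each quarter-plane, using the a priori bound $|f(z)|\le Ae^{B|z|}$. Second, I use the integrability of $\log^+|f|$ on $\mathbb R$ to control the boundary values.
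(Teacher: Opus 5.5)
The paper gives no argument of its own for this statement; it only quotes the result from Rosenblum--Rovnyak. Your proposal therefore has to stand on its own, and it has a genuine gap in Step~1: the proof that (2) forces exponential type. Apply your half-plane bound to $f$ and to $f^{\#}$, and replace $\log|f(t)|$ by $\log^+|f(t)|$. What you then have is, for $\Im z\neq 0$, $x=\Re z$ and some $a\ge 0$,
\[
\log|f(z)|\le a|\Im z|+\frac{|\Im z|}{\pi}\int_{-\infty}^{\infty}\frac{\log^+|f(t)|}{(t-x)^2+(\Im z)^2}\,dt .
\]
This already gives $O(|z|)$ in $\{\epsilon<|\arg z|<\pi-\epsilon\}$, so Phragm\'en--Lindel\"of is not needed there. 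Near $\mathbb R$, however, the right side is only $\lesssim |\Im z|+(1+x^2)/|\Im z|$. Phragm\'en--Lindel\"of in the thin sectors around $\mathbb R$ requires an a priori order bound inside them, and that is exactly what is unknown: the real axis lies inside those sectors, and on it you only have an integral condition. A majorant in the strip $|\Im z|\le 1$ does not help either. On $\Im z=\pm1$ the bound above is in general only $O(x^2)$, so a maximum-principle argument in the strip cannot produce $O(|x|)$, and it would itself need an a priori growth bound there. The appeal to ``the standard bound on harmonic measure'' does not supply the missing step.

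The missing idea is to average instead of estimating pointwise.
\begin{itemize}
\item Since $\log|f|$ is subharmonic, for $|z_0|=r\ge 1$ one has $\log|f(z_0)|\le \frac{1}{\pi r^2}\int_{D(z_0,r)}\log|f|\,dA$. Insert the majorant, which is valid off the null set $\mathbb R$, and use Tonelli.
\item Because $|\Im z|\le |z-t|$ for real $t$, one gets $\int_{D(z_0,r)}\frac{|\Im z|}{|z-t|^2}\,dA(z)\le \int_{D(t,r)}\frac{dA(z)}{|z-t|}=2\pi r$. For $|t|\ge 3r$ the same integral is at most $18\pi r^3/t^2$.
\item Combine these with $\int_{|t|<3r}\log^+|f(t)|\,dt=O(r^2)$ and $\int_{|t|\ge 3r}t^{-2}\log^+|f(t)|\,dt=O(1)$. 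This gives $\log|f(z_0)|=O(r)$, i.e.\ exponential type.
\end{itemize}
The computation works with any positive measure $\mu$ satisfying $\int d\mu/(1+t^2)<\infty$ in place of $\log^+|f(t)|\,dt$. So you do not even need to rule out $S_2$.

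Two smaller corrections:
\begin{itemize}
\item With $f=e^{-iaz}OBS_1$, the exponential factor contributes $+a\Im z$, not $-a\Im z$.
\item In Step~2, the assertion that $v(iy)\le 0$ follows from Phragm\'en--Lindel\"of in the quarter-planes is circular. All you need there is $\limsup_{y\to\infty}v(iy)/y\le 0$, which is immediate from exponential type $\tau$ and $u\ge 0$. Then the half-plane Phragm\'en--Lindel\"of theorem gives $v\le 0$. Alternatively, your quarter-plane argument works for $f e^{i(\tau+\epsilon)z}e^{-(u+i\tilde u)}$, which is at most $1$ in modulus on $\mathbb R$ and bounded on the positive imaginary axis.
\end{itemize}
So Step~2 is the routine direction. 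The real obstacle is the growth estimate in Step~1, which your plan leaves unproved.
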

The next theorem provides the relationship between the mean type and the exponential type of an entire function.
\begin{thm}\label{thmB}\cite[Theorem 6.18]{RR} 
Let $f \not\equiv 0$ be entire function satisfying equivalent conditions $(1)$ and $(2)$ of Theorem \ref{thmA}. Let $\mathrm{ET}(f)$ denote the exponential type of $f$. Let $\mathrm{mt_+}$ and $\mathrm{mt_-}$ denote the mean types of restriction of $f$ and $f^{\#}$ to the upper half-plane, respectively. Then, $\mathrm{mt_+}+ \mathrm{mt_-} \geq 0$ and $\mathrm{ET(f)}= \max \{ \mathrm{mt_+}, \mathrm{mt_-}\}.$ 
\end{thm}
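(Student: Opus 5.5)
The statement is Krein's theorem relating mean types to exponential type (\cite[Theorem 6.18]{RR}). I would prove it from the Nevanlinna factorization recalled in Section \ref{sec pre}, applied in both half-planes.

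\textbf{Step 1 (majorants).} By Theorem \ref{thmA}, $f|_{\mathbb C_+}$ and $f^{\#}|_{\mathbb C_+}$ lie in $N(\mathbb C_+)$. Since $f$ is entire, it extends continuously to $\mathbb R$. Hence, as discussed before Theorem \ref{thmD}, there is no singular inner factor in the denominator, and the factorization reads $f=e^{-i\,\mathrm{mt}_+ z}\,O\,B\,S_1$. Since $|B|\le1$ and $|S_1|\le1$, for $z=x+iy$ with $y>0$ this gives
\[
\log|f(z)|\le \mathrm{mt}_+\,y+\frac1\pi\int_{-\infty}^{\infty}\frac{y\,\log|f(t)|}{(t-x)^2+y^2}\,dt .
\]
Applying the same argument to $f^{\#}$ gives the analogous bound in $\mathbb C_-$, with $\mathrm{mt}_-$ and $|y|$ in place of $\mathrm{mt}_+$ and $y$. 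The convergence condition $\int \log^+|f(t)|/(1+t^2)\,dt<\infty$ makes the Poisson term $P(z)$ well defined.

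\textbf{Step 2 ($\mathrm{ET}=\max$).} The lower bound $\mathrm{ET}(f)\ge\max\{\mathrm{mt}_+,\mathrm{mt}_-\}$ is immediate, because $\log|f(\pm iy)|\le\log M_f(y)$.

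For the upper bound I would show that the Poisson term is $o(|z|)$. Splitting $\log|f|=\log^+|f|-\log^-|f|$, the negative part only helps, so it is enough to bound the Poisson integral of $\log^+|f|$. A standard estimate using the convergent integral shows that this integral is $o(|z|)$ uniformly in $|z|$ on each sector $\varepsilon<\arg z<\pi-\varepsilon$. It can be large near $\mathbb R$, but it is harmonic and its boundary values $\log^+|f(t)|$ are themselves $o(|t|)$ only in an averaged sense.

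I expect this near-axis behaviour to be the main obstacle. My plan there is as follows. Put $m=\max\{\mathrm{mt}_+,\mathrm{mt}_-\}$ and fix $\epsilon>0$. Take $g(z)=f(z)e^{imz}$ in the first quadrant. By the a priori hypothesis that $f$ has exponential type, $g$ has order at most one there. Then apply Phragmén--Lindelöf in the two thin sectors $0<\arg z<\varepsilon$ and $\pi-\varepsilon<\arg z<\pi$ to $g\,e^{i\epsilon z}$. On the ray $\arg z=\varepsilon$ this function is controlled by Step 1. On the real axis I would control it by comparing with the harmonic majorant $P(z)+\mathrm{mt}_+ y$ of $\log|f|$, whose boundary values are $\log|f(t)|$.

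This gives $\log|f(z)|\le(m+\epsilon)|z|+o(|z|)$ in every direction, hence $\mathrm{ET}(f)\le m$.

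\textbf{Step 3 ($\mathrm{mt}_++\mathrm{mt}_-\ge0$).} Suppose instead that $\mathrm{mt}_++\mathrm{mt}_-=-2\delta<0$. Choose $c$ with $\mathrm{mt}_+-c=\mathrm{mt}_-+c=-\delta$, and set $h(z)=f(z)e^{icz}$. Then both mean types of $h$ equal $-\delta$, and Step 1 gives, in both half-planes,
\[
\log|h(z)|\le-\delta|\Im z|+P_h(z).
\]
Dividing by $z^k$ if necessary, we may assume $h(0)\neq0$. Now average this inequality over the circle $|z|=r$. Jensen's formula gives a left side of at least $\log|h(0)|$. On the right, the term $-\delta|\Im z|$ averages to $-\tfrac{2\delta}{\pi}r$. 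The Poisson term averages to $o(r)$: by Fubini, the circle-average of the Poisson kernel against $\log^+|h(t)|$ is bounded by a kernel of size $\lesssim\min(1,r/t^2)$, and the integrability condition then yields $o(r)$. Letting $r\to\infty$ gives a contradiction, so $\mathrm{mt}_++\mathrm{mt}_-\ge0$.
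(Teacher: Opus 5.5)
There is a genuine gap in the upper bound of Step~2, at the point you flagged. The paper gives no proof of this statement (it is quoted from Rosenblum--Rovnyak), so your argument has to stand on its own. Step~1, the lower bound in Step~2, and the strategy of Step~3 are sound. The trouble is the upper bound $\mathrm{ET}(f)\le m$. To run Phragm\'en--Lindel\"of in $0<\arg z<\varepsilon$ you need control on the edge $\arg z=0$, i.e.\ a pointwise bound $\log|f(t)|\le \eta|t|+C_\eta$ for real $t$. Nothing you have supplies it. The hypotheses give only $\log|f(t)|\le A+B|t|$, with $B$ possibly larger than $m$, plus the averaged condition $\int\log^+|f(t)|\,(1+t^2)^{-1}\,dt<\infty$. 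Your multiplier $e^{i(m+\epsilon)z}$ is unimodular on $\mathbb R$, so it does nothing on that edge. ``Comparing with the harmonic majorant $P(z)+\mathrm{mt}_+y$'' is a tautology there: that majorant has boundary values $\log|f(t)|$, and it is exactly the Step~1 bound that fails to be $o(|z|)$ near the axis. The majorant alone cannot give what you need. For instance, $u=\sum_{n\ge1}B2^n\mathbf{1}_{[2^n-1/2,\,2^n+1/2]}$ satisfies $u(t)\le B|t|+B$ and $\int u\,(1+t^2)^{-1}dt<\infty$, yet its Poisson integral tends to $B2^n$ at $2^n+iy$ as $y\to0^+$. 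The missing input has to come from the analyticity of $f$ across $\mathbb R$.

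The standard repair is to use a sector bisected by the real axis, so that both edges lie where Step~1 gives control. Take $0<\varepsilon<\pi/4$. Step~1 for $f$ and for $f^{\#}$, together with your uniform $o(r)$ bound for the Poisson term off the axis, gives $\log|f(re^{\pm i\varepsilon})|\le m r\sin\varepsilon+o(r)$. With $K=m\tan\varepsilon+\eta$, the function $f(z)e^{-Kz}$ is of exponential type and bounded on both rays $\arg z=\pm\varepsilon$. Phragm\'en--Lindel\"of in $|\arg z|<\varepsilon$ (opening $2\varepsilon<\pi$) then gives $\log|f(z)|\le (m\tan\varepsilon+\eta)|z|+C\le (m+\eta)|z|+C$ there; near $\arg z=\pi$, use $e^{Kz}$ instead. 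Alternatively, apply the sub-mean-value inequality for $\log|f|$ over discs $D(t,\eta|t|)$, combined with the Step~1 bounds in both half-planes; this gives $\log|f(t)|=o(|t|)$ on $\mathbb R$ directly.

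Two smaller points:
\begin{enumerate}
\item This repair, and your middle-sector estimate $\mathrm{mt}_\pm\sin\theta\le m$, both use $m\ge0$, so Step~3 must be proved first.
\item In Step~3, the semicircle average of the Poisson kernel is a constant multiple of $\frac{1}{|t|}\log\frac{r+|t|}{\bigl||t|-r\bigr|}$. This has a logarithmic singularity at $|t|=r$, so it is not $\lesssim\min(1,r/t^2)$ there. The $o(r)$ conclusion still holds, but only after you also use $\log^+|f(t)|\le A+B|t|$ on the band $\bigl||t|-r\bigr|<\eta r$; that band then contributes $O(\eta\log(1/\eta))\,r$.
\end{enumerate}
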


Next, we recall the angular derivative at $\infty$ of an analytic function in the upper half-plane. A sequence of points $z_n:= x_n+iy_n$ in $\mathbb{C}_+$ is said to approach $\infty$ non-tangentially if $y_n \rightarrow \infty$ and the ratios $\frac{\vert x_n \vert}{y_n}$ are uniformly bounded. We say a map $\phi: \mathbb{C}_+ \rightarrow \mathbb{C}_+$ fixes $\infty$ non-tangentially if $\phi(z_n)\rightarrow \infty$ whenever $z_n \rightarrow \infty$ non-tangentially, and we write $\phi(\infty)= \infty$. Moreover, if the non-tangential limit $\lim_{z \rightarrow \infty} \frac{z}{\phi(z)}$ exists and is finite, then we say that $\phi$ has a finite angular derivative and write $\phi'(\infty)=\lim_{z \rightarrow \infty} \frac{z}{\phi(z)}$. For more details, see \cite{Boo}, \cite{Javadbook}, and \cite{SamJury} in the upper half-plane setting, the disc setting, and the right half-plane setting, respectively. The following is the Julia-Caratheodory theorem for the upper half-plane setting.

\begin{thm}\cite[Proposition 2.2]{Boo}\label{JC}
Let $\phi:\mathbb{C}_+ \rightarrow \mathbb{C}_+$ be holomorphic. Then the following are equivalent:
\begin{itemize}
\item[(1)] $\phi(\infty)= \infty$ and $\phi'(\infty)< \infty$.
\item[(2)] $\sup_{z \in \mathbb{C}_+} \frac{\Im(z)}{\Im(\phi(z))} < \infty$.
\item[(3)] $\limsup_{z \rightarrow \infty} \frac{\Im(z)}{\Im(\phi(z))} < \infty$.
\end{itemize}
In this case, quantities in $(2)$ and $(3)$ are both equal to $\phi'(\infty)$. 
\end{thm}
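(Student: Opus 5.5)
This is the Julia--Carath\'eodory theorem in the upper half-plane. I would prove it by moving the unit-disc version over with a Cayley transform, which keeps track of all three quantities, rather than proving it directly. The case where $\phi$ is a real constant is excluded, since $\phi$ maps into $\mathbb{C}_+$.

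\textbf{Step 1 (transfer to the disc).} Write $C(z)=\frac{z-i}{z+i}$, so that $C$ maps $\mathbb{C}_+$ onto $\mathbb{D}$ and sends $\infty$ to $1$. Set $\psi=C\circ\phi\circ C^{-1}$, a holomorphic self-map of $\mathbb{D}$. Non-tangential approach to $\infty$ in $\mathbb{C}_+$ corresponds exactly to non-tangential approach to $1$ in $\mathbb{D}$. The key identity is
\[
\frac{1-|C(z)|^2}{4}=\frac{\Im z}{|z+i|^2}.
\]
With $w=C(z)$ this gives
\[
\frac{1-|\psi(w)|}{1-|w|}\asymp \frac{\Im\phi(z)}{\Im z}\cdot\frac{|z+i|^2}{|\phi(z)+i|^2}.
\]

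\textbf{Step 2 ((2)$\Leftrightarrow$(3)).} The implication (2)$\Rightarrow$(3) is trivial. For the converse, and for identifying the constants, I would use the half-plane Nevanlinna representation
\[
\phi(z)=cz+d+\int\Big(\frac{1}{t-z}-\frac{t}{1+t^2}\Big)d\mu(t),\qquad c\ge 0.
\]
Taking imaginary parts gives
\[
\Im\phi(z)=c\,\Im z+\Im z\int\frac{d\mu(t)}{|t-z|^2}\;\ge\; c\,\Im z .
\]
Therefore $\sup\frac{\Im z}{\Im\phi(z)}\le 1/c$ whenever $c>0$. Along $z=iy$ with $y\to\infty$, dominated convergence gives $\frac{\Im\phi(iy)}{y}\to c$. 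So the quantity in (2) equals $1/c$. The quantity in (3) is at least $\lim_y \frac{y}{\Im\phi(iy)}=1/c$ and at most the supremum, hence also equals $1/c$. When $c=0$ both quantities are infinite. This shows (2)$\Leftrightarrow$(3)$\Leftrightarrow c>0$.

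\textbf{Step 3 ((1)$\Leftrightarrow c>0$).} The main step is showing that $\phi(z)/z\to c$ non-tangentially. In a Stolz region $|t-z|\gtrsim |t|+|z|$, so each term of the integral is $o(|z|)$ uniformly in $t$. Splitting the integral into $|t|\le R$ and $|t|>R$ and using $\int\frac{d\mu}{1+t^2}<\infty$, one gets $\frac{1}{z}\int(\cdots)\,d\mu\to 0$ non-tangentially. Hence $\phi(z)/z\to c$.

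If $c>0$, this gives $\phi(\infty)=\infty$ and $\phi'(\infty)=\lim z/\phi(z)=1/c<\infty$, matching the value found in Step 2. Conversely, suppose $\phi(\infty)=\infty$ and $\lim z/\phi(z)=L<\infty$ exists non-tangentially. Along $iy$ we have $\phi(iy)/(iy)\to c$. If $c=0$ this would force $|z/\phi(z)|\to\infty$, contradicting $L<\infty$, so $c>0$. This closes the cycle of equivalences and gives equality of all three quantities, namely $1/c$.

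The main obstacle is the uniform Stolz-angle estimate of the integral term in Step 3. If that estimate is troublesome, I would fall back on Step 1 and cite the disc Julia--Carath\'eodory theorem, tracking the boundary point $1$ and the conformal factor $|z+i|^2/|\phi(z)+i|^2\to 1/|\phi'(\infty)|^2$ appropriately.
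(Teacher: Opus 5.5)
The paper does not prove this statement; it quotes it from \cite{Boo}, so there is no internal argument to compare against. Your Steps 2--3 form a correct, self-contained proof. They reduce everything to the single number $c=\lim_{y\to\infty}\Im\phi(iy)/y\ge 0$. The identity $\Im\phi(z)=c\,\Im z+\Im z\int|t-z|^{-2}\,d\mu(t)$ gives $\sup\Im z/\Im\phi(z)\le 1/c$. Dominated convergence along $iy$ gives the reverse bound for both the supremum and the $\limsup$, whichever reading of $z\to\infty$ (unrestricted or non-tangential) is intended. The non-tangential limit $\phi(z)/z\to c$ then yields (1) with $\phi'(\infty)=1/c$. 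Your converse is also right, and it shows that $\phi(\infty)=\infty$ is automatic once $\lim z/\phi(z)$ exists and is finite.

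One phrase in Step 3 needs fixing: the integrand is not $o(|z|)$ uniformly in $t$. Write $\frac{1}{t-z}-\frac{t}{1+t^2}=\frac{1+tz}{(t-z)(1+t^2)}$, so that $\frac{1}{z}\int(\cdots)\,d\mu=\int\frac{t+1/z}{t-z}\,\frac{d\mu(t)}{1+t^2}$. By your estimate $|t-z|\gtrsim|t|+|z|$, the function $\frac{t+1/z}{t-z}$ is bounded uniformly in $t\in\mathbb{R}$ and in $z$ in a Stolz angle with $|z|\ge 1$. It also tends to $0$ for each fixed $t$. The convergence is not uniform, since for fixed $z$ it tends to $1$ as $t\to\pm\infty$. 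Dominated convergence against the finite measure $(1+t^2)^{-1}d\mu(t)$, which is what your split at $|t|=R$ amounts to, then gives the claim.

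Step 1 is never used. If you did fall back on it, note two things:
\begin{itemize}
\item $|z+i|^2/|\phi(z)+i|^2\to|\phi'(\infty)|^2$, not $1/|\phi'(\infty)|^2$, so your fallback as written would give the wrong constant.
\item No conformal factor is needed at all. Since $|1-C(z)|^2/(1-|C(z)|^2)=1/\Im z$, Julia's lemma for $\psi$ at the boundary point $1$ reads exactly $\Im z/\Im\phi(z)\le\psi'(1)$. Moreover $\psi'(1)=\lim\frac{z+i}{\phi(z)+i}=\phi'(\infty)$.
\end{itemize}
That is the standard disc-based route: it gets (1)$\Rightarrow$(2) from horocycle geometry and reuses the disc Julia--Carath\'eodory theorem. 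Your Nevanlinna route avoids the disc theorem entirely, needs only dominated convergence, and makes the equality of all three quantities transparent through the coefficient $c$.
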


\section{Composition operators from model spaces to the Hardy Hilbert spaces in the upper half-plane}\label{sec2}
 
This section discusses the boundedness and compactness of composition operators from model spaces to Hardy Hilbert spaces in the upper half-plane. For any inner function $\chi$ in the upper half-plane $\mathbb{C}_+$, the model space $H(\chi)$ is defined as $H(\chi):= H^2(\mathbb{C}_+) \ominus \chi H^2(\mathbb{C}_+).$
\begin{thm}\label{lemma2.1}
Let $\chi$ be an inner function in $\mathbb{C}_+$ and $\phi : \mathbb{C}_+ \rightarrow \mathbb{C}_+$ be an analytic function. Then, for the composition operator $C_{\phi}: H(\chi) \rightarrow H^2(\mathbb{C}_+)$, the following statements hold:
\begin{itemize} 
\item[(1)] If $C_{\phi}$ is bounded, then
\begin{equation} \label{ineq2.1}
\sup_{z \in \mathbb{C}_+} \frac{\Im(z)}{\Im(\phi(z))}[1- \vert \chi(\phi(z)) \vert^2] < \infty.
\end{equation}
\item[(2)] If
\begin{equation} \label{eq3.2}
\sup_{z \in \mathbb{C}_+} \frac{\Im(z)}{\Im(\phi(z))} < \infty,
\end{equation}
then $C_{\phi}$ is bounded.
\end{itemize} 
Moreover, if the inner function $\chi$ and the analytic function $\phi$ are such that $$sup_{z \in \mathbb{C}_+} \vert \chi(\phi(z))\vert <1,$$ then the composition operator $C_{\phi}$ is bounded if and only if \eqref{eq3.2} holds.
\end{thm}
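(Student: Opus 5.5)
For part (1) I would test $C_\phi$ on the reproducing kernels of $H(\chi)$. With the normalisation used in the paper, $\rho_\xi(z)=-2\pi i(z-\bar\xi)$, the kernel of $H^2(\mathbb{C}_+)$ at $w$ is $k_w=1/\rho_w$, with $\|k_w\|^2=k_w(w)=1/(4\pi\Im w)$. The kernel of the model space is
\[
K^\chi_w(z)=\frac{1-\overline{\chi(w)}\chi(z)}{\rho_w(z)},\qquad \|K^\chi_w\|^2=\frac{1-|\chi(w)|^2}{4\pi\Im w}.
\]

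Boundedness of $C_\phi$ implies boundedness of the adjoint. For $f\in H(\chi)$ and $z\in\mathbb{C}_+$,
\[
\langle f, C_\phi^* k_z\rangle=\langle C_\phi f,k_z\rangle=f(\phi(z))=\langle f,K^\chi_{\phi(z)}\rangle,
\]
so $C_\phi^*k_z=K^\chi_{\phi(z)}$. Hence $\|K^\chi_{\phi(z)}\|^2\le \|C_\phi\|^2\|k_z\|^2$, which reads
\[
\frac{1-|\chi(\phi(z))|^2}{\Im\phi(z)}\le \frac{\|C_\phi\|^2}{\Im z}.
\]
Rearranging gives
\[
\frac{\Im z}{\Im\phi(z)}\bigl[1-|\chi(\phi(z))|^2\bigr]\le \|C_\phi\|^2,
\]
which is \eqref{ineq2.1}.

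For part (2), since $H(\chi)\subset H^2(\mathbb{C}_+)$ isometrically, it suffices that $C_\phi$ is bounded on $H^2(\mathbb{C}_+)$. By Theorem \ref{JC}, condition \eqref{eq3.2} is equivalent to $\phi(\infty)=\infty$ with $\phi'(\infty)<\infty$. Matache's theorem \cite{Matache} then gives boundedness on $H^2(\mathbb{C}_+)$, and restricting to $H(\chi)$ finishes this part.

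If one wants a self-contained argument instead of quoting \cite{Matache}, I would transfer to the disc. Let $T(w)=i\frac{1+w}{1-w}$ be the Cayley map, $\psi=T^{-1}\circ\phi\circ T$, and use the unitary $U:H^2(\mathbb{C}_+)\to H^2(\mathbb{D})$, $(Uf)(w)=c\,f(T(w))/(1-w)$. A computation gives
\[
UC_\phi U^{-1}=M_h C_\psi,\qquad h(w)=\frac{1-\psi(w)}{1-w}.
\]
Littlewood's principle shows $C_\psi$ is bounded on $H^2(\mathbb{D})$. The Julia–Carathéodory condition at the boundary point $1$, which is \eqref{eq3.2} in disc form, gives $h\in H^\infty$. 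Hence $M_hC_\psi$ is bounded. \textbf{This step is the main technical obstacle}: one must check that \eqref{eq3.2} indeed bounds $|1-\psi(w)|/|1-w|$. I would obtain this from Julia's lemma, which gives $\frac{|1-\psi|^2}{1-|\psi|^2}\le c\frac{|1-w|^2}{1-|w|^2}$, combined with a Schwarz–Pick estimate.

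For the final assertion, suppose $\sup_z|\chi(\phi(z))|=\delta<1$. Then $1-|\chi(\phi(z))|^2\ge 1-\delta^2>0$ for all $z$, so \eqref{ineq2.1} implies \eqref{eq3.2} with the constant divided by $1-\delta^2$. Thus boundedness of $C_\phi$ gives \eqref{eq3.2} via (1), and the converse is (2).
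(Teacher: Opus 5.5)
Your part (1) and the final assertion coincide with the paper's argument: you test $C_\phi^*$ on the $H^2(\mathbb{C}_+)$ kernels, use $C_\phi^*k_z=K^\chi_{\phi(z)}$, and then divide by $1-\delta^2$.

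For part (2) your main route is correct but differs from the paper's. You note that $H(\chi)$ is a closed, isometrically embedded subspace of $H^2(\mathbb{C}_+)$, so it suffices that $C_\phi$ be bounded on all of $H^2(\mathbb{C}_+)$. You then get this from Theorem~\ref{JC} plus Matache's theorem, with the paper's convention $\phi'(\infty)=\lim z/\phi(z)$, which is the one under which Matache's criterion reads ``finite''. The paper instead proves (2) from scratch. With $\lambda=\sup_z \Im z/\Im\phi(z)$ it shows that
\[
L(z,w)=\lambda\,k_w(z)-K^\chi_{\phi(w)}(\phi(z))
=\Bigl[\lambda\,k_w(z)-k_{\phi(w)}(\phi(z))\Bigr]+\chi(\phi(z))\overline{\chi(\phi(w))}\,k_{\phi(w)}(\phi(z))
\]
is a positive kernel. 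The bracket is $\lambda$ times the Schur product of the pulled-back kernel $k_{\phi(w)}(\phi(z))$ with the Nevanlinna kernel $\frac{\psi(z)-\overline{\psi(w)}}{z-\bar w}$ of the Pick function $\psi(z)=\phi(z)-\lambda^{-1}z$. Positivity of the bracket alone is exactly the sufficiency half of Matache's theorem, with the explicit bound $\|C_\phi\|\le\sqrt{\lambda}$. So your reduction explains why the $\chi$-term plays no real role. The paper's argument, in turn, is self-contained and gives a norm estimate that a bare citation of Matache does not.

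Your optional ``self-contained'' disc argument, however, fails at exactly the step you flagged: \eqref{eq3.2} does \emph{not} imply $h=(1-\psi)/(1-w)\in H^\infty(\mathbb{D})$. In half-plane coordinates $h=(z+i)/(\phi(z)+i)$. Take $\phi(z)=z+\tan z$. Since $\Im\tan z>0$ on $\mathbb{C}_+$, we have $\Im\phi(z)>\Im z$, so \eqref{eq3.2} holds with constant $1$. Yet $x+\tan x$ has a zero $\xi_k$ in every interval $(k\pi-\frac{\pi}{2},k\pi+\frac{\pi}{2})$, and $\phi$ is analytic near $\xi_k$. Hence $|h(\xi_k+i\varepsilon)|\to|\xi_k+i|$ as $\varepsilon\to 0^+$, which is unbounded in $k$.

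The estimates you proposed cannot rescue this step. Condition \eqref{eq3.2} in disc form (Julia's inequality) only yields $|h(w)|^2\le\lambda\,\frac{1-|\psi(w)|^2}{1-|w|^2}$. Schwarz--Pick bounds that last quotient from below (by $|\psi'(w)|$), not from above, and Julia--Carath\'eodory controls $h$ only in non-tangential approach regions at $1$. In this example $M_hC_\psi=UC_\phi U^{-1}$ is bounded while $M_h$ is not, so this route needs a genuinely different estimate. Either drop it and rely on Matache, or replace it with the kernel-positivity argument above.
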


\begin{proof}
(1)~Let $K^{\chi}_w(z)$ and $K^{H^2}_w(z)$ represent the reproducing kernels of $H(\chi)$ and $H^2(\mathbb{C}_+)$ respectively. Let $C_{\phi}: H(\chi) \rightarrow H^2(\mathbb{C}_+)$ be bounded, i.e., there exists a constant $M$ such that $\Vert C_{\phi}\Vert = M$. The adjoint operator $C_{\phi}^*$ is bounded on $\operatorname{span}\{K^{H^2}_w, w \in \mathbb{C}_+\}$, thus $\Vert C_{\phi}^*K^{H^2}_z \Vert \leq M \Vert K^{H^2}_z \Vert.$ Since, $C_{\phi}^*K^{H^2}_z= K_{\phi(z)}^{\chi}$, we have the following inequalities:
\begin{eqnarray*}
&& \frac{1}{2 \pi} \frac{1-\vert \chi(\phi(z))\vert ^2}{2 \Im(\phi(z))} \leq M \frac{1}{2 \pi} \frac{1}{2 \Im(z)}  \\ 
&\Rightarrow &\frac{\Im(z)}{\Im(\phi(z))}(1-\vert \chi(\phi(z))\vert ^2) \leq M^2 \\
&\Rightarrow& \sup_{z \in \mathbb{C}_+} \frac{\Im(z)}{\Im(\phi(z))}(1-\vert \chi(\phi(z))\vert ^2)< \infty.
\end{eqnarray*}

(2)~For sufficiency, form a densely defined operator $C_{\phi}^*: K^{H^2}_w \rightarrow K^{\chi}_{\phi(w)}$. If $C_{\phi}^*$ is bounded on $\operatorname{span}\{K^{H^2}_w, w \in \mathbb{C}_+\}$, then it can be uniquely extended to a bounded operator on $H^2(\mathbb{C}_+)$. As, \begin{equation}
\langle (C_{\phi}^*)^*f, K^{H^2}_w \rangle_{H^2(\mathbb{C}_+)}= \langle f, C_{\phi}^* K^{H^2}_w \rangle= \langle f, K_{\phi(w)}^{H(\chi)}\rangle= f(\phi(w))= \langle C_{\phi}f, K^{H^2}_w\rangle
\end{equation}
for all $f \in H(\chi)$, so $C_{\phi}^*$ is the adjoint of $C_{\phi}$. So, in order to show that $C_{\phi}$ is bounded, it is sufficient to show that $C_{\phi}^*$ is bounded on $\operatorname{span}\{K^{H^2}_w, w \in \mathbb{C}_+\}$. Let $f= \sum_{i=1}^n c_i K^{H^2}_{w_i}$ and $\lambda = \sup\frac{\Im(z)}{\Im(\phi(z))}$. Define a function $L$ by $$L(z,w):= \lambda\langle K^{H^2}_w,K^{H^2}_z \rangle_{H^2(\mathbb{C}_+)}- \langle C_{\phi}^* K^{H^2}_w, C_{\phi}^* K^{H^2}_z \rangle_{H(\chi)}.$$ If $L$ is positive, i.e,\begin{eqnarray*}
&& \sum_{i,j=1}^{n} c_i \bar{c_j}L(w_i, w_j) \geq 0\\
&\Rightarrow& \sum c_i \bar{c_j} \lambda \langle K^{H^2}_{w_i}, K^{H^2}_{w_j} \rangle- \sum c_i \bar{c_j}  \langle C_{\phi}^* K^{H^2}_{w_i}, C_{\phi}^* K^{H^2}_{w_j} \rangle \geq 0\\
& \Rightarrow &  \lambda \Vert f \Vert^2 - \Vert C_{\phi}^*f\Vert^2 \geq 0.
\end{eqnarray*}
Hence, $C_{\phi}^*$ is bounded on $\operatorname{span}\{K^{H^2}_w, w \in \mathbb{C}_+\}$. Now it is left to show that $L$ is positive on $\mathbb{C}_+ \times
 \mathbb{C}_+$. 
 \begin{eqnarray*}
 L(z,w)&=& \lambda K^{H^2}(z,w)- K^{H(\chi)}(\phi(z), \phi(w))\\
&=& \lambda \frac{1}{-2 \pi i (z-\bar{w})}- \frac{1-\overline{\chi(\phi(w))}\chi(\phi(z))}{-2 \pi i (\phi(z)-\overline{\phi(w)})} \\
&=&\lambda \frac{1}{-2 \pi i (z-\bar{w})}- \frac{1}{-2 \pi i (\phi(z)-\overline{\phi(w)})} +\frac{\overline{\chi(\phi(w))}\chi(\phi(z))}{-2 \pi i (\phi(z)-\overline{\phi(w)})}
 \end{eqnarray*} 
 Let \begin{eqnarray*}
&& L_1(z,w) = \lambda \frac{1}{-2 \pi i (z-\bar{w})}- \frac{1}{-2 \pi i (\phi(z)-\overline{\phi(w)})}\\
 &\Rightarrow& \lambda^{-1}L_1(z,w)= \frac{1}{-2 \pi i (\phi(z)-\overline{\phi(w)})} \bigg[ \frac{\phi(z)- \lambda^{-1}z- (\overline{\phi(w)- \lambda^{-1}w})}{z-\bar{w}} \bigg]
 \end{eqnarray*}
 As, \begin{eqnarray*}
 &&\sup_{z \in \mathbb{C}_+} \frac{\Im(z)}{\Im(\phi(z))}=\lambda\\
 &\Rightarrow& \Im(z)\leq \lambda \Im(\phi(z))\\
 & \Rightarrow & \Im(\phi(z)- \lambda^{-1}z) \geq 0 
 \end{eqnarray*}  
 for all $z \in \mathbb{C}_+$. Thus, $L_1(z,w)$  is a positive scalar multiple of the pointwise product of two positive kernels, i.e., it is the pointwise product of the pull-back of the reproducing kernel of $H^2(\mathbb{C}_+)$ under $\phi$ given by $$\frac{1}{-2 \pi i (\phi(z)-\overline{\phi(w)})},$$ and the Nevanlinna kernel given by $$\frac{\phi(z)- \lambda^{-1}z- (\overline{\phi(w)- \lambda^{-1}w})}{z-\bar{w}}.$$ By
the Schur product theorem, the pointwise product of two positive kernels is again a positive kernel. Therefore $L_1$ is a positive kernel. Since the sum of two positive kernel functions is positive, we get that $L(z,w)$ is a positive kernel function.

Now, let $\chi$ and $\phi$ be such that $\sup_{z \in \mathbb{C}_+} \vert \chi(\phi(z))\vert <1$. This implies that $\vert \chi(\phi(z))\vert <1$ for all $z \in \mathbb{C}_+$. Hence, if $C_{\phi}$ is bounded, then from the inequality (\ref{ineq2.1}), we get that \begin{eqnarray*}
\frac{\Im(z)}{\Im(\phi(z))} &\leq& \frac{M^2}{1- \vert \chi(\phi(z)) \vert^2}\\
\Rightarrow \sup_{z \in \mathbb{C}_+} \frac{\Im(z)}{\Im(\phi(z))} &\leq& \sup_{z \in \mathbb{C}_+}\frac{M^2}{1- \vert \chi(\phi(z)) \vert^2}\\
            &=& \frac{M^2}{1- \sup_{z \in \mathbb{C}_+}\vert \chi(\phi(z)) \vert^2 } < \infty.
\end{eqnarray*}
\end{proof}

\begin{rmk}\label{remark}
By Theorem \ref{JC}, the sufficient condition stated in the above theorem is equivalent to the analytic self map $\phi$ having a finite angular derivative at infinity. 
\end{rmk}

Next, we shall discuss the compactness of the composition operators from model spaces to the Hardy Hilbert spaces in the upper half-plane.

\begin{thm}\label{thm2.7}
Let $\chi$ be a non constant inner function on $\mathbb{C}_+$ and $\phi : \mathbb{C}_+ \rightarrow \mathbb{C}_+$ be an analytic function such that $\sup_{z \in \mathbb{C}_+} \vert \chi(\phi(z))\vert <1$. Then any bounded composition operator $C_{\phi}: H(\chi) \rightarrow H^2(\mathbb{C}_+)$ is not compact.
\end{thm}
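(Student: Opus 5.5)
We argue by contradiction through the adjoint acting on normalized reproducing kernels. Suppose $C_{\phi}: H(\chi)\to H^2(\mathbb{C}_+)$ is bounded and compact. Then $C_\phi^*$ is also compact. Since the normalized kernels $k_z:=K^{H^2}_z/\Vert K^{H^2}_z\Vert$ of $H^2(\mathbb{C}_+)$ tend weakly to $0$ as $z\to\infty$ (or as $z$ approaches the real axis), we would get $\Vert C_\phi^* k_z\Vert_{H(\chi)}\to 0$ along any such sequence. We also have $C_\phi^*K^{H^2}_z=K^{\chi}_{\phi(z)}$, and the explicit kernels give
\[
\Vert C_\phi^* k_z\Vert^2=\frac{\Im(z)}{\Im(\phi(z))}\bigl(1-\vert\chi(\phi(z))\vert^2\bigr).
\]
Put $s:=\sup_{z}\vert\chi(\phi(z))\vert<1$. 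Compactness then forces $\Im(z)/\Im(\phi(z))\to 0$ along every sequence $z_n$ with $k_{z_n}\rightharpoonup 0$.

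The key step is to show that this decay is impossible for an analytic self-map $\phi$ of $\mathbb{C}_+$. For this we use the Nevanlinna (Herglotz) representation
\[
\phi(z)=\alpha+\beta z+\int\Bigl(\frac{1}{t-z}-\frac{t}{1+t^2}\Bigr)d\mu(t),\qquad \beta\ge 0,
\]
which gives
\[
\Im\phi(iy)=\beta y+y\int\frac{d\mu(t)}{t^2+y^2}.
\]

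Suppose first that $\Im(z)/\Im(\phi(z))\to0$ along $z=iy$ with $y\to\infty$. Then $\Im\phi(iy)/y\to\infty$. But
\[
\frac{\Im\phi(iy)}{y}=\beta+\int\frac{d\mu(t)}{t^2+y^2},
\]
and the integral is decreasing in $y$ and finite for $y\geq 1$ (because $\int d\mu/(1+t^2)<\infty$). Hence this ratio stays bounded as $y\to\infty$, a contradiction.

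So the sequence $z_n=iy_n$ with $y_n\to\infty$ already contradicts compactness. By Theorem~\ref{lemma2.1}, boundedness under the hypothesis $s<1$ gives $\lambda=\sup\Im z/\Im\phi(z)<\infty$, i.e. $\phi'(\infty)$ is finite by Remark~\ref{remark}, but we do not even need this.

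To carry this out cleanly, I will proceed in four steps.
\begin{enumerate}
\item Recall that compactness of $C_\phi$ is equivalent to compactness of $C_\phi^*$.
\item Verify that $k_{iy}\rightharpoonup0$ as $y\to\infty$. This holds because $\langle f,k_{iy}\rangle=f(iy)\sqrt{4\pi y}\to 0$ for $f$ in the dense set of rational functions in $H^2(\mathbb{C}_+)$ with poles in $\mathbb{C}_-$, such as $1/(z+i)^2$ (these decay like $1/y^2$), and the $k_{iy}$ are uniformly bounded.
\item Compute the norm $\Vert C_\phi^*k_{iy}\Vert^2\ge(1-s^2)\,y/\Im\phi(iy)$.
\item Bound $\Im\phi(iy)\le Cy$ for $y\ge1$ via the Herglotz representation.
\end{enumerate}
Together these give $\Vert C_\phi^*k_{iy}\Vert^2\ge (1-s^2)/C>0$, which contradicts compactness.

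The step I expect to be the main obstacle is step 4: making sure the representation applies to a general analytic self-map of $\mathbb{C}_+$, including the constant-like cases. If $\phi$ is constant, $\Im\phi(iy)$ is a fixed constant and the bound holds trivially. I would also double-check that the non-constancy of $\chi$ is only needed to guarantee $H(\chi)\neq\{0\}$, so that $C_\phi$ is not trivially the zero operator, which is compact. Indeed, $s<1$ with $\chi$ constant unimodular cannot occur, while a constant $\chi$ with $|\chi|<1$ is not inner.
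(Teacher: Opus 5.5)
Your proof is correct and follows the same route as the paper. Both test $C_\phi^*$ on the normalized kernels $K^{H^2}_z/\Vert K^{H^2}_z\Vert$ as $z\to\infty$, use $C_\phi^*K^{H^2}_z=K^{\chi}_{\phi(z)}$ to get $\Vert C_\phi^*k_z\Vert^2=\frac{\Im(z)}{\Im(\phi(z))}\bigl(1-\vert\chi(\phi(z))\vert^2\bigr)$, and show this stays bounded away from zero.

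The only difference is in the last step. The paper gets $\limsup_{z\to\infty}\Im(z)/\Im(\phi(z))=\lambda>0$ by first deducing $\lambda<\infty$ from boundedness (last part of Theorem \ref{lemma2.1}) and then invoking Julia--Carath\'eodory (Theorem \ref{JC}). Your Herglotz bound $\Im\phi(iy)\le Cy$ for $y\ge 1$ holds for every analytic self-map of $\mathbb{C}_+$, so it bypasses both of those ingredients and is slightly more self-contained.
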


\begin{proof}
For given $\epsilon > 0$, there exists a compact operator $K$ such that $\Vert C_{\phi}^* \Vert_e+\epsilon \geq \Vert C_{\phi}^*-K \Vert$, where $\Vert C_{\phi}^* \Vert_e$ is the essential norm given by $$\Vert C_{\phi}^* \Vert_e= \inf \{ \Vert C_{\phi}^*- Q \Vert: Q~\text{is compact} \}.$$ 
Now,
\begin{eqnarray*}
\Vert C_{\phi}^*-K \Vert &\geq& \limsup_{z \rightarrow\infty} \frac{\Vert (C_{\phi}^*-K)K_z^{H^2} \Vert}{\Vert K_z^{H^2} \Vert}\\
&=& \limsup_{z \rightarrow\infty} \frac{\Vert C_{\phi}^*K_z^{H^2} \Vert}{\Vert K_z^{H^2} \Vert}\\
&=& \bigg(\limsup_{z \rightarrow\infty} \frac{\Im(z)}{\Im(\phi(z))}(1- \vert \chi(\phi(z))\vert^2)\bigg)^{1/2}\\
&\geq& \bigg(\limsup_{z \rightarrow\infty} \frac{\Im(z)}{\Im(\phi(z))}~\liminf_{z \rightarrow\infty} (1- \vert \chi(\phi(z))\vert^2)\bigg)^{1/2} >0.
\end{eqnarray*}
The second last equality follows from the compactness of $K$ and the fact that the normalized sequence $\frac{K_z^{H^2}}{\Vert K_z^{H^2} \Vert} \rightarrow 0$ weakly as $z \rightarrow \infty$ non-tangentially. Observe that since \(\phi:\mathbb C_+\to \mathbb C_+\) is analytic, we have $\Im z>0$ and $\Im(\phi(z))>0$ for every \(z\in \mathbb C_+\). Hence $
\frac{\Im z}{\Im(\phi(z))}>0$ for every \(z\in \mathbb C_+\), which implies that $\lambda:=\sup_{z\in\mathbb C_+}
\frac{\Im z}{\Im(\phi(z))}>0$. By the hypothesis of Theorem 3.3, and by the last part of Theorem 3.1, we have
$
\lambda<\infty.
$
Moreover, by the Julia-Caratheodory theorem in the upper half-plane (Theorem 2.6),
$
\lambda =\sup_{z\in\mathbb C_+}
\frac{\Im z}{\Im(\phi(z))}
=
\limsup_{z\to\infty}
\frac{\Im z}{\Im(\phi(z))}
=
\phi'(\infty).
$
Hence,
$
\limsup_{z\to\infty}
\frac{\Im z}{\Im(\phi(z))}>0.
$ 

This implies that the operator $C_{\phi}^*$, and hence the operator $C_{\phi}$, is not compact.
\end{proof}

In the following example, we examine the boundedness and compactness of composition operators from a model space associated with a subclass of meromorphic inner functions to the Hardy Hilbert space.

\begin{example}
Let $\chi(z)$ be a meromorphic inner function that is not a Blaschke product. Then there exists a Blaschke product $B(z)$, whose sequence of zeros has no limit point in $\mathbb{C}$, and a positive constant $\alpha(>0)$ such that \begin{equation}\label{chi}\chi(z)= B(z)\exp(i \alpha z).
\end{equation}
Let $\phi: \mathbb{C}_+ \rightarrow \mathbb{C}_+$ be an analytic function satisfying $\inf_{z \in \mathbb{C}_+} \Im(\phi(z))=d >0$. Then $\vert \chi(\phi(z)) \vert = \vert B(\phi(z)) \vert~ \vert \exp(i \alpha \phi(z))\vert \leq \vert \exp(i \alpha \phi(z))\vert$. Consequently,
\begin{eqnarray*}
\sup_{z \in \mathbb{C}_+}\vert \chi(\phi(z)) \vert 
&\leq& \sup_{z \in \mathbb{C}_+}\vert \exp(i \alpha \phi(z)) \vert\\
&=& \sup_{z \in \mathbb{C}_+} \exp(- \alpha \Im(\phi(z)))\\
&=& \exp(- \alpha \inf_{z \in \mathbb{C}_+} \Im(\phi(z)))
=\exp(-\alpha d)<1.
\end{eqnarray*}
 Hence, by Theorem \ref{lemma2.1}, the composition operator $C_{\phi}: H(\chi)\rightarrow H^2(\mathbb{C}_+)$ is bounded if and only if $$\sup_{z \in \mathbb{C}_+} \frac{\Im(z)}{\Im(\phi(z))} < \infty,$$
and, by Theorem \ref{thm2.7}, any such bounded operator is not compact.  

In particular, consider a vertical translation operator $\phi$ in $\mathbb{C}_+$ defined by \begin{equation} \label{phi}\phi(z)= z+ib, ~b>0. 
 \end{equation}
Then, $\phi(\mathbb{C}_+) \subseteq \mathbb{C}_+$ is an analytic function and $\inf_{z \in \mathbb{C}_+} \Im(\phi(z))=b >0$. Moreover, $$\sup_{z \in \mathbb{C}_+} \frac{\Im(z)}{\Im(\phi(z))} = \sup_{z \in \mathbb{C}_+} \frac{y}{y+b}=1.$$ Therefore, for $\chi(z)$ and $\phi(z)$ as given by \eqref{chi} and \eqref{phi}, respectively, the composition operator $C_{\phi}: H(\chi)\rightarrow H^2(\mathbb{C}_+)$ is bounded but not compact.
\end{example}

\section{Composition operators on de Branges spaces of entire functions}\label{sec3}

In this section, we discuss the boundedness and compactness of composition operators on de Branges spaces of entire functions. Let $\phi$ be an entire function such that $\phi(\mathbb{C}_+) \subseteq \mathbb{C}_+$ and $E$ be an entire function in the class $\mathcal{H}B$. By the closed graph theorem, the composition operator $C_{\phi}$ is bounded on $H(E)$ if and only if $f \circ \phi \in H(E)$, for all $f \in H(E)$, or equivalently, if $f \circ \phi$ is entire function, 
\begin{equation}  \label{defi}
\frac{f \circ \phi}{E} \in H^2(\mathbb{C}_+) \quad \mbox{and} \quad \frac{(f \circ \phi)^{\#}}{E} \in H^2(\mathbb{C}_+),~\mbox{for all}~f \in H(E).
\end{equation}
As, $\phi(\mathbb{C}_+) \subseteq \mathbb{C}_+$, we have that $\Im(\phi) \geq 0$ for all $z \in \mathbb{C}_+$. Consequently, $\phi$ admits a Nevanlinna representation on $\mathbb{C}_+$ of the form $$\phi(z)= b+cz+ \frac{1}{\pi} \int_{-\infty}^{\infty} \bigg( \frac{1}{t-z}- \frac{t}{1+t^2} \bigg) d\mu(t),$$ where $b \in \mathbb{R}, c \geq 0,$ and $\mu$ is non-negative Borel measure on $( -\infty, \infty )$ satisfying $$ \int_{-\infty}^{\infty} \frac{d \mu(t)}{1+t^2} < \infty.$$ Now, suppose that, $\phi^{\#}(z):=\overline{\phi(\bar{z})}= \phi(z)$ on $\mathbb{C}_+$. Also, observe that $(f\circ \phi)^{\#}= f^{\#} \circ \phi^{\#}$. Then, since both $E$ and $E \circ \phi$ have no zeros in $\mathbb{C}_+$, we may write $$\frac{f \circ \phi}{E}= \frac{E \circ \phi}{E}~ \frac{f \circ \phi}{E \circ \phi}$$ and $$\frac{(f \circ \phi)^{\#}}{E}=\frac{f^{\#} \circ \phi^{\#}}{E}= \frac{f^{\#} \circ \phi}{E}= \frac{E \circ \phi}{E}~\frac{f^{\#} \circ \phi}{E \circ \phi}.$$
Thus, if the following two conditions are satisfied
\begin{itemize}
\item[(1)] $\frac{E \circ \phi}{E} \in H^{\infty}(\mathbb{C}_+)$,
\item[(2)] $\frac{f \circ \phi}{E \circ \phi} \in H^2(\mathbb{C}_+), \frac{f^{\#} \circ \phi}{E \circ \phi} \in H^2(\mathbb{C}_+)$ for all $f \in H(E)$,
\end{itemize} then the operator $C_{\phi}$ is bounded on $H(E)$.
The following theorem provides an improved sufficient condition for the boundedness of $C_{\phi}.$

\begin{thm}\label{thm3.3}
Let $\phi$ and $E$ be entire functions such that $\phi(\mathbb{C}_+) \subseteq \mathbb{C}_+$, $\phi^{\#}= \phi$ on $\mathbb{C}_+$ and $E \in \mathcal{H}B$. If the following two conditions are satisfied
\begin{itemize}
\item[(1)] $\frac{E \circ \phi}{E} \in H^{\infty}(\mathbb{C}_+)$,
\item[(2)] $\sup_{z \in \mathbb{C}_+}\frac{\Im(z)}{\Im(\phi(z))} < \infty$, 
\end{itemize} then the operator $C_{\phi}$ is bounded on $H(E)$.
\end{thm}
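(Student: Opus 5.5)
The plan is to reduce to the factorization displayed just before the statement, and to check condition (2) of that discussion by means of Theorem \ref{lemma2.1} together with the unitary map of Theorem \ref{thmD}. Write $\chi = E^{\#}/E$. For $f\in H(E)$ we have $f\circ\phi$ entire, and, since $E$ and $E\circ\phi$ have no zeros in $\mathbb{C}_+$ (because $\phi(\mathbb{C}_+)\subseteq\mathbb{C}_+$ and $E\in\mathcal{H}B$ has no zeros in $\mathbb{C}_+$), we can write
\[
\frac{f\circ\phi}{E}=\frac{E\circ\phi}{E}\cdot\Big(\frac{f}{E}\Big)\circ\phi,\qquad \frac{(f\circ\phi)^{\#}}{E}=\frac{E\circ\phi}{E}\cdot\Big(\frac{f^{\#}}{E}\Big)\circ\phi ,
\]
where the second identity uses $\phi^{\#}=\phi$ on $\mathbb{C}_+$. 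By hypothesis (1), the first factor lies in $H^{\infty}(\mathbb{C}_+)$. Multiplication by an $H^\infty$ function preserves $H^2(\mathbb{C}_+)$, so it suffices to show that $(f/E)\circ\phi$ and $(f^{\#}/E)\circ\phi$ belong to $H^2(\mathbb{C}_+)$.

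By Theorem \ref{thmD}, $f/E$ and $f^{\#}/E$ both belong to the model space $H(\chi)$, since $f^{\#}\in H(E)$ whenever $f\in H(E)$. Hypothesis (2) is exactly condition \eqref{eq3.2}. Part (2) of Theorem \ref{lemma2.1} therefore gives that $C_\phi: H(\chi)\to H^2(\mathbb{C}_+)$ is bounded, with norm at most $\lambda^{1/2}$, where $\lambda=\sup_{z\in\mathbb{C}_+}\Im z/\Im\phi(z)$. Hence $g\circ\phi\in H^2(\mathbb{C}_+)$ for every $g\in H(\chi)$, and in particular for $g=f/E$ and $g=f^{\#}/E$. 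Consequently $f\circ\phi\in H(E)$ by the definition \eqref{defi}.

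For boundedness, one can either invoke the closed graph theorem as noted at the start of the section, or estimate directly:
\[
\|f\circ\phi\|_{H(E)}=\Big\|\frac{f\circ\phi}{E}\Big\|_{H^2}\le \Big\|\frac{E\circ\phi}{E}\Big\|_\infty\,\lambda^{1/2}\,\Big\|\frac fE\Big\|_{H^2}=\Big\|\frac{E\circ\phi}{E}\Big\|_\infty\lambda^{1/2}\|f\|_{H(E)}.
\]

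The main point needing care is the identification $(f\circ\phi)^{\#}=f^{\#}\circ\phi$ on $\mathbb{C}_+$, which uses the hypothesis $\phi^{\#}=\phi$ there. A second point is making sure that the pieces being multiplied are genuinely analytic on $\mathbb{C}_+$, which follows from $E\circ\phi$ being zero-free on $\mathbb{C}_+$. Beyond these two points, the argument is a direct combination of earlier results.
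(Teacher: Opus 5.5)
Your proposal is correct and follows essentially the same route as the paper: factor out $(E\circ\phi)/E\in H^\infty(\mathbb{C}_+)$, use Theorem \ref{thmD} (with $f^{\#}\in H(E)$) to place $f/E$ and $f^{\#}/E$ in $H(\chi)$, and invoke part (2) of Theorem \ref{lemma2.1}. Your explicit bound $\|C_\phi\|\le \|(E\circ\phi)/E\|_\infty\,\lambda^{1/2}$ simply makes quantitative what the paper leaves implicit.
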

\begin{proof}
Let $S= \{ \frac{f}{E} : f \in H(E) \} $ and $S^{\#}= \{ \frac{f^{\#}}{E} : f \in H(E) \} $. By definition of de Branges space $H(E)$, we have $S= S^{\#}$. Moreover, by Theorem \ref{thmD}, $S=H(\chi)$, where $\chi= \frac{E^{\#}}{E}$. The condition $(2)$ of the above discussion holds if and only if the operator $C_{\phi}:S \rightarrow H^2(\mathbb{C}_+)$ is bounded.  Now, the proof follows by using Theorem \ref{lemma2.1}.
\end{proof}

Now, we provide the necessary condition for the boundedness of the composition operator $C_{\phi}$ on the de Branges space $H(E)$.

\begin{thm}\label{thm3.4}
Let $\phi$ and $E$ be entire functions such that $E \in \mathcal{H}B$. If the operator $C_{\phi}$ is bounded on $H(E)$, then the following condition holds true:
$$\sup_{z \in \Lambda} \frac{\Im(z)}{\Im(\phi(z))} \bigg\vert \frac{E \circ \phi(z)}{E(z)} \bigg\vert^2 \bigg( \frac{1- \vert \chi(\phi(z)) \vert ^2}{1- \vert \chi(z) \vert ^2} \bigg)< \infty$$
where, $\chi= \frac{E^{\#}}{E}$ and $\Lambda= \{ z \in \mathbb{C}_+ : E(\phi(z)) \neq 0 \} $. Moreover, if the entire function $\phi$ is such that $\phi(\mathbb{C}_+) \subseteq \mathbb{C}_+$, then $\Lambda= \mathbb{C}_+$. 
\end{thm}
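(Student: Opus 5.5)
The plan is to apply the adjoint of $C_{\phi}$ to reproducing kernels of $H(E)$, as in the proof of Theorem \ref{lemma2.1}(1). If $C_{\phi}$ is bounded on $H(E)$ with $\Vert C_{\phi}\Vert = M$, then for every $z\in\mathbb{C}$ and $f\in H(E)$ we have $\langle f, C_{\phi}^*K_z\rangle = (C_\phi f)(z) = f(\phi(z)) = \langle f, K_{\phi(z)}\rangle$. Hence $C_{\phi}^*K_z = K_{\phi(z)}$, which gives
\[
K_{\phi(z)}(\phi(z)) = \Vert K_{\phi(z)}\Vert^2 \leq M^2 \Vert K_z\Vert^2 = M^2 K_z(z).
\]

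Next I would evaluate the diagonal of the kernel for $w\in\mathbb{C}_+$ (so $w\neq \bar w$). Substituting $z=w$ in the formula for $K_w$ gives
\[
K_w(w)=\frac{|E(w)|^2-|E^{\#}(w)|^2}{4\pi \Im(w)}=\frac{|E(w)|^2\,(1-|\chi(w)|^2)}{4\pi\Im(w)},
\]
where $\chi = E^{\#}/E$; this factorization requires $E(w)\neq 0$. For $z\in\mathbb{C}_+$, $E(z)\neq 0$ automatically, since $E\in\mathcal{H}B$ gives $|E(z)|>|E(\bar z)|\ge 0$. For $\phi(z)$ we need two things:
\begin{itemize}
\item $\phi(z)\in\mathbb{C}_+$, so that the same diagonal formula holds;
\item $E(\phi(z))\neq 0$, which is the restriction to $\Lambda$.
\end{itemize}

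The main obstacle is the case $\phi(z)\notin\mathbb{C}_+$, where $\Im(\phi(z))\le 0$ and the expression is not of the stated form. I would handle it by interpreting the supremum over those $z\in\Lambda$ with $\phi(z)\in\mathbb{C}_+$. Alternatively, I would observe that for $\Im w<0$ the formula still reads $K_w(w)=|E(w)|^2(1-|\chi(w)|^2)/(4\pi\Im w)$ with both factors in the numerator and denominator of the right sign ($|\chi(w)|\ge1$ there), so the quotient is still positive. Dividing is legitimate provided $E(w)\ne0$, and at real points one uses the continuity/derivative version. Under the added hypothesis $\phi(\mathbb{C}_+)\subseteq\mathbb{C}_+$ none of this arises.

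Plugging both diagonal values into the inequality and rearranging (all quantities positive) yields
\[
\frac{\Im(z)}{\Im(\phi(z))}\left|\frac{E(\phi(z))}{E(z)}\right|^2\frac{1-|\chi(\phi(z))|^2}{1-|\chi(z)|^2}\le M^2
\]
for every $z\in\Lambda$, which is the claim. Finally, if $\phi(\mathbb{C}_+)\subseteq\mathbb{C}_+$, then for $z\in\mathbb{C}_+$ we have $\phi(z)\in\mathbb{C}_+$, and the Hermite--Biehler inequality gives $|E(\phi(z))|>|E(\overline{\phi(z)})|\ge0$. So $E(\phi(z))\neq0$ and $\Lambda=\mathbb{C}_+$.
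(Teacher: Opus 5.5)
Your proposal is correct and follows the paper's route. The paper's proof simply says the argument mirrors the necessity part of Theorem \ref{lemma2.1}, which is exactly what you do: use $C_\phi^*K_z=K_{\phi(z)}$ and $\|K_{\phi(z)}\|\le M\|K_z\|$, then insert the diagonal formula $K_w(w)=|E(w)|^2(1-|\chi(w)|^2)/(4\pi\Im(w))$. Beyond the paper, you also check that this quotient stays positive when $\phi(z)\in\mathbb{C}_-$, and you use the Hermite--Biehler inequality to get $\Lambda=\mathbb{C}_+$; both are details the paper leaves implicit.
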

\begin{proof}
The proof follows similarly to the necessary part of  Theorem \ref{lemma2.1}.
\end{proof}


Next, we shall discuss the compactness of composition operators on de Brange spaces of entire functions $H(E)$. Let $t_n$ be the zeros of the real function $A(z)$ defined as follows:
\begin{equation} \label{A}
A(z)= \frac{E(z)+E^{\#}(z)}{2}.
\end{equation}
We recall the following theorem due to de Branges \cite{Brange} (it is also stated in \cite{Bellavitathesis}), which provides an orthonormal basis of the space $H(E)$. This result will be used to derive a condition under which a bounded composition operator on $H(E)$ fails to be compact.

\begin{thm}(see \cite[Theorem 22]{Brange}, \cite[Theorem 2.8]{Bellavitathesis}) If $A(z) \in H(E)$, then the set $\{ \frac{A(z)}{\Vert A(z) \Vert}\} \cup \{ \frac{k_{t_n}(z)}{\Vert k_{t_n}(z) \Vert} \}$, where $k_{t_n}$ are the reproducing kernels of the space $H(E)$ at the points $t_n$, forms an orthonormal basis of the space $H(E)$. If $A(z) \notin H(E)$, then the set $\{ \frac{k_{t_n}(z)}{\Vert k_{t_n}(z) \Vert} \}$ forms an orthonormal basis of $H(E)$.
\end{thm}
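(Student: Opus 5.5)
The plan is to follow de Branges' original argument, which splits into an orthogonality part and a completeness part. Write $E=A-iB$ with $A=\frac{E+E^{\#}}{2}$ and $B=\frac{i(E-E^{\#})}{2}$. Both $A$ and $B$ are real entire functions, i.e. $A^{\#}=A$ and $B^{\#}=B$. Substituting into the formula for $K_w$ from Section \ref{sec pre} gives the real form of the kernel,
$$K_w(z)=\frac{B(z)\overline{A(w)}-A(z)\overline{B(w)}}{\pi(z-\bar w)}.$$

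\emph{Orthogonality.} For two distinct real zeros $t_n\neq t_m$ of $A$ we get
$$\langle k_{t_n},k_{t_m}\rangle=K_{t_n}(t_m)=\frac{B(t_m)A(t_n)-A(t_m)B(t_n)}{\pi(t_m-t_n)}=0.$$
So the normalized kernels form an orthonormal system. They are nonzero, because $K_t(t)>0$ for real $t$: de Branges' formula gives $K_t(t)=\frac{1}{\pi}\bigl(B'(t)A(t)-A'(t)B(t)\bigr)$, and $B(t_n)\neq 0$ since $E$ has no real zeros common with $E^{\#}$. If $A\in H(E)$, then $\langle A,k_{t_n}\rangle=A(t_n)=0$, so $A$ is orthogonal to every kernel as well. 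For this to make sense I also need the zeros of $A$ to be real and simple. Since $A/E=(1+\chi)/2$ with $\chi=E^{\#}/E$ satisfying $|\chi|<1$ on $\mathbb C_+$ and $|\chi|>1$ on $\mathbb C_-$, the function $A$ can vanish only on $\mathbb R$, where $\chi=-1$. Simplicity follows because the argument of $\chi$ is strictly monotone on $\mathbb R$.

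\emph{Completeness.} This is the main step and the expected obstacle. Suppose $f\in H(E)$ is orthogonal to every $k_{t_n}$. Then $f(t_n)=0$ for all $n$, so $g:=f/A$ is entire. On $\mathbb C_+$ I will write
$$g=\frac{f}{E}\cdot\frac{2}{1+\chi}.$$
The key observation is that $\Re\frac{1}{1+\chi}\ge\frac12$ on $\mathbb C_+$. Hence $\frac{E}{A}$ has positive real part there, so it belongs to $N(\mathbb C_+)$ with mean type $0$. By its Herglotz representation it also satisfies $|E(iy)/A(iy)|=O(y)$ as $y\to\infty$, together with the corresponding bound $O(1/y)$ as $y\to 0^+$, and similar bounds in any fixed sector.

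Since $f/E\in H^2(\mathbb C_+)$, it is of non-positive mean type and $|f(z)/E(z)|\le (4\pi\Im z)^{-1/2}\|f\|$. Hence $g\in N(\mathbb C_+)$ has non-positive mean type. The same argument applied to $f^{\#}/A^{\#}=f^{\#}/A$ shows that $g^{\#}\in N(\mathbb C_+)$ has non-positive mean type. By Theorem \ref{thmA} and Theorem \ref{thmB}, $g$ is of exponential type $\max\{\mathrm{mt}_+,\mathrm{mt}_-\}\le 0$, i.e. of exponential type zero. Combining the pointwise estimates gives $|g(z)|=O(|z|^{1/2})$ along rays off the real axis.

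A Phragmén--Lindelöf argument in each of the four quarter-planes, using that $g$ has zero exponential type, then yields $|g(z)|=O(|z|^{1/2})$ on all of $\mathbb C$. By Liouville's theorem $g$ is a polynomial of degree $<1$, so $g\equiv c$ is constant. Getting these growth bounds uniform enough for Phragmén--Lindelöf is the part that needs care, and I expect to do it via the Herglotz representation of $E/A$ and the Cauchy--Schwarz bound on $f/E$.

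\emph{Conclusion.} We have shown $f=cA$. If $A\notin H(E)$, this forces $c=0$, so the normalized kernels alone are complete. If $A\in H(E)$, the orthogonal complement of the kernels is spanned by $A$, so adjoining $A/\Vert A\Vert$ gives an orthonormal basis.
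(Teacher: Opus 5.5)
The paper gives no argument of its own here (it only quotes de Branges' Theorem 22). Your proof is the standard argument behind that theorem and is correct: orthogonality comes from the real form $K_w(z)=\frac{B(z)\overline{A(w)}-A(z)\overline{B(w)}}{\pi(z-\bar w)}$, and completeness from showing, via Krein's theorem and the Herglotz bound for $E/A$, that $f/A=(f/E)(E/A)$ is entire of exponential type zero with $O(|z|^{1/2})$ growth off $\mathbb R$, hence constant. Two small tidy-ups: the statement tacitly assumes $E$ has no real zeros (a real zero of $E$ is automatically a zero of $E^{\#}$ and of $A$, and the kernel there vanishes identically, so your phrase ``no real zeros common with $E^{\#}$'' should read ``no real zeros''); and since you have no bound on $\mathbb R$ itself, run Phragm\'en--Lindel\"of in the half-planes $\Re z>0$ and $\Re z<0$ bounded by the imaginary axis (exponential type zero suffices there), or in sectors whose edges avoid $\mathbb R$, rather than in the coordinate quadrants.
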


Observe that $\frac{k_{t_n}}{\Vert k_{t_n} \Vert}$ converges to $0$ weakly as $n$ tends to infinity. Indeed, for any $f \in H(E)$, $$ \langle f, \frac{k_{t_n}}{\Vert k_{t_n} \Vert} \rangle= \frac{f(t_n)}{\Vert k_{t_n} \Vert}$$
tends to $0$ by the Parseval's identity as $n \rightarrow \infty$. The following theorem provides a sufficient condition for a bounded composition operator to be non compact.

\begin{thm}\label{Thm4.4} Let $\phi$ and $E$ be entire functions such that $E \in \mathcal{H}B$. Let $C_{\phi}$ be a bounded composition operator on $H(E)$. If the following condition is satisfied:
$$d \leq \limsup_{n \rightarrow \infty} \left\{
    \begin{array}{ll}
          \frac{ \vert E(\phi(t_n))\vert ^2 - \vert E(\overline{\phi(t_n)})\vert^2}{-4 i \Im(\phi(t_n))E(t_n) \Re(E'(t_n))} & \text{if } \phi(t_n) \notin \mathbb{R} \\ 
             \frac{-i \Im(E'(\phi(t_n)) \overline{E(\phi(t_n))})}{E(t_n) \Re(E'(t_n))}& \text{if } \phi(t_n) \in \mathbb{R} 
    \end{array}, \right.$$ where $d$ is some finite positive constant and $t_n$ are the zeros of $A(z)$(as defined in \eqref{A}). Then the operator $C_{\phi}$ is not compact on $H(E)$.
\end{thm}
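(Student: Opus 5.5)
We argue via the adjoint acting on normalized kernels at the points $t_n$, in the spirit of the proof of Theorem \ref{thm2.7}. Write $k_w$ for the reproducing kernel of $H(E)$ and $e_n := k_{t_n}/\Vert k_{t_n}\Vert$. By the orthonormal basis theorem just recalled, $(e_n)$ is orthonormal, hence $e_n\to 0$ weakly, as observed above. If $C_\phi$ were compact, then $C_\phi^*$ would be compact, and therefore $\Vert C_\phi^* e_n\Vert\to 0$. So it suffices to show that $\limsup_n \Vert C_\phi^* e_n\Vert^2\geq d>0$, which contradicts compactness.

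\textbf{Step 1 (adjoint on kernels).} For $f\in H(E)$ we have $\langle f, C_\phi^* k_w\rangle = (f\circ\phi)(w) = \langle f, k_{\phi(w)}\rangle$. Hence $C_\phi^* k_w = k_{\phi(w)}$, and
\[
\Vert C_\phi^* e_n\Vert^2=\frac{k_{\phi(t_n)}(\phi(t_n))}{k_{t_n}(t_n)}.
\]

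\textbf{Step 2 (compute the numerator).} Use the kernel formula from Section \ref{sec pre} with $z=w=\phi(t_n)$.
\begin{itemize}
\item[(a)] If $\phi(t_n)\notin\mathbb{R}$, the off-diagonal formula applies, giving
\[
k_{\phi(t_n)}(\phi(t_n))=\frac{|E(\phi(t_n))|^2-|E(\overline{\phi(t_n)})|^2}{4\pi\,\Im(\phi(t_n))}.
\]
\item[(b)] If $\phi(t_n)=x\in\mathbb{R}$, the diagonal formula gives $(E'(x)\overline{E(x)}-E(x)\overline{E'(x)})/(-2\pi i)$, which equals $-\Im(E'(x)\overline{E(x)})/\pi$ up to the normalization of constants in the paper.
\end{itemize}

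\textbf{Step 3 (compute the denominator).} At the real point $t_n$ with $A(t_n)=0$, we have $E^{\#}(t_n)=-E(t_n)$, so $\overline{E(t_n)}=-E(t_n)$, i.e. $E(t_n)$ is purely imaginary. The diagonal formula then gives
\[
k_{t_n}(t_n)=\frac{E'(t_n)\overline{E(t_n)}-E(t_n)\overline{E'(t_n)}}{-2\pi i}=\frac{-E(t_n)\cdot 2\Re E'(t_n)}{-2\pi i},
\]
which is proportional to $E(t_n)\Re(E'(t_n))/(\pi i)$.

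Taking the ratio, the factors $\pi$ and the constants $2$, $i$ combine to give exactly the two expressions in the hypothesis. The hypothesis then says $\limsup_n\Vert C_\phi^*e_n\Vert^2\geq d>0$, which is the desired contradiction.

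\textbf{Main obstacle.} The only delicate point is the bookkeeping of constants and signs in Steps 2 and 3, especially using $\overline{E(t_n)}=-E(t_n)$ to turn the diagonal kernel into the $E(t_n)\Re E'(t_n)$ form; I would verify this carefully. If $A\in H(E)$, the extra basis vector does not matter, since only the orthonormality of $(e_n)$ is used.
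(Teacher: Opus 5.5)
Your proof is correct and is essentially the paper's argument: both use $C_\phi^* k_{t_n}=k_{\phi(t_n)}$ and the weak nullity of the orthonormal sequence $k_{t_n}/\Vert k_{t_n}\Vert$, together with the fact that a compact operator sends such a sequence to a norm-null one; the paper phrases this as the bound $\Vert C_\phi^*\Vert_e\geq\sqrt{d}$ rather than as a contradiction. Your constant bookkeeping, which the paper omits, gives the two expressions exactly rather than up to normalization, since $k_{t_n}(t_n)=E(t_n)\Re(E'(t_n))/(\pi i)$ and $1/(-4i)=i/4$.
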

\begin{proof}
For given $\epsilon > 0$, there exists a compact operator $K$ such that $\Vert C_{\phi}^* \Vert_e+\epsilon \geq \Vert C_{\phi}^*-K \Vert$. 
Now,
\begin{eqnarray*}
\Vert C_{\phi}^*-K \Vert &\geq& \limsup_{n \rightarrow\infty} \Vert (C_{\phi}^*-K)\frac{k_{t_n} }{\Vert k_{t_n} \Vert}\Vert\\
&=& \limsup_{n \rightarrow\infty} \frac{\Vert C_{\phi}^*k_{t_n} \Vert}{\Vert k_{t_n} \Vert}\\
&=& \limsup_{n \rightarrow\infty} \frac{\Vert k_{\phi(t_n)} \Vert}{\Vert k_{t_n} \Vert} \\
&=& \limsup_{n \rightarrow \infty} \left\{
    \begin{array}{ll}
           \sqrt{\frac{ \vert E(\phi(t_n))\vert ^2 - \vert E(\overline{\phi(t_n)})\vert^2}{-4 i \Im(\phi(t_n))E(t_n) \Re(E'(t_n))}}  & \text{if } \phi(t_n) \notin \mathbb{R} \\ 
             \sqrt{\frac{-i \Im(E'(\phi(t_n)) \overline{E(\phi(t_n))})}{E(t_n) \Re(E'(t_n))}}& \text{if } \phi(t_n) \in \mathbb{R} 
    \end{array} \right.\\
    &\geq& \sqrt{d}>0.
\end{eqnarray*}
This implies that the operator $C_{\phi}^*$, and hence the operator $C_{\phi}$, is not compact.
\end{proof}

We now provide some examples illustrating the boundedness and compactness of composition operators on de Branges spaces.

\begin{example}\label{example3.6}
Let $\phi(z)=z+b$, where $b= b_1+ib_2 \in \mathbb{C}$ such that $b_2 \geq 0$. Clearly, $\phi$ is an entire function such that $\phi(\mathbb{C}_+) \subseteq \mathbb{C}_+$. Also, $\sup_{z \in \mathbb{C}_+} \frac{\Im(z)}{\Im(\phi(z))}= 1$. Now, let $E(z)= E(0) \exp(-idz)$ such that $d=d_1+id_2$ and $d_1 > 0$. It is easy to check that $E \in \mathcal{H}B$. Such an $E$ also belongs to the Polya class with no zeros \cite[Chapter 1, Section 7]{Brange}. Observe that 
$$
\bigg\vert \frac{E \circ \phi}{E}(z) \bigg\vert = \bigg\vert\frac{E(z+b)}{E(z)} \bigg\vert= \bigg\vert \frac{E(0)\exp(-id(z+b))}{E(0)\exp(-idz)} \bigg\vert=\vert \exp(-idb)\vert.
$$
Hence $\frac{E \circ \phi}{E} \in H^{\infty}(\mathbb{C}_+)$. Thus, in the case $b_2=0$, we have $\phi^{\#}=\phi$ on $\mathbb{C}_+$; consequently, Theorem \ref{thm3.3} implies that the composition operator $C_{\phi}$ is bounded on $H(E)$.

Now, we consider the case when $b_2>0$. By the argument used in the proof of Theorem \ref{thm3.3}, it suffices to show that $\frac{(f\circ \phi)^{\#}}{E}\in H^2(\mathbb{C}_+)$
for every $f\in H(E)$. This will establish the boundedness of $C_{\phi}$ on $H(E)$. For \(f\in H(E)\), we have \(\phi^\#(z)=z+\overline b\), and hence
\[
    \frac{(f\circ\phi)^\#(z)}{E(z)}
    =\frac{f^\#(z+\overline b)}{E(z)}
    =C e^{id_1z}\left(e^{-d_2(z+\overline b)}f^\#(z+\overline b)\right),
\]
where \(C\neq 0\) is a constant. Since \(e^{-d_2z}f^\#(z)\) is of exponential
type at most \(d_1\) and belongs to \(L^2(\mathbb R)\), the Plancherel-Polya
estimate (see \cite[Theorem 6.7.1]{Boas}, \cite[Lecture 7]{Levin}) gives
\[
\int_{\mathbb R}\left|\frac{(f\circ\phi)^\#(x+iy)}{E(x+iy)}\right|^2dx
\leq C e^{-2d_1y}e^{2d_1|y-b_2|}
\int_{\mathbb R}|e^{-d_2t}f^\#(t)|^2dt .
\]
Since \(e^{-2d_1y}e^{2d_1|y-b_2|}\) is uniformly bounded for \(y>0\), it follows that $\frac{(f\circ\phi)^\#}{E}\in H^2(\mathbb C_+).$
Next, we discuss the compactness of the operator $C_{\phi}$. 

Case 1. If $b_2> 0$, then $\phi(t_n) \notin \mathbb{R}$. Hence, $$\frac{ \vert E(\phi(t_n))\vert ^2 - \vert E(\overline{\phi(t_n)})\vert^2}{-4 i \Im(\phi(t_n))E(t_n) \Re(E'(t_n))}= \linebreak
\frac{\exp(2 d_2 b_1)(\exp(2d_1 b_2))- \exp(-2d_1 b_2))}{4 d_1 b_2}, $$
which is a positive constant.  

Case 2. If $b_2= 0$, then $\phi(t_n) \in \mathbb{R}$. Hence,$$\frac{-i \Im(E'(\phi(t_n)) \overline{E(\phi(t_n))})}{E(t_n) \Re(E'(t_n))}=\exp(2d_2b_1),$$
which is again a positive constant.

Thus, by Theorem \ref{Thm4.4}, the bounded operator $C_{\phi}$ is not compact on $H(E)$.
\end{example}

\begin{example}\label{example3.7}
Let $\phi(z)=az+b$, where $0<a \leq 1$ and $b =b_1+ib_2$ such that $b_2 \geq 0$. Let $E(z)= E(0) \exp(-idz)$ such that $d > 0$. It is easy to check that $E \in \mathcal{H}B$. Clearly, $\phi$ is an entire function such that $\phi(\mathbb{C}_+) \subseteq \mathbb{C}_+$. Also, $\sup_{z \in \mathbb{C}_+} \frac{\Im(z)}{\Im(\phi(z))}= \frac{1}{a}$. Observe that 
$$
\bigg\vert \frac{E \circ \phi}{E}(z) \bigg\vert = \bigg\vert\frac{E(az+b)}{E(z)} \bigg\vert= \exp(d \Im(b)+d(a-1)y) <  \exp(d \Im(b)).
$$
Hence $\frac{E \circ \phi}{E} \in H^{\infty}(\mathbb{C}_+)$. Thus, in the case $b_2=0$, we have $\phi^{\#}=\phi$ on $\mathbb{C}_+$; consequently, Theorem \ref{thm3.3} implies that the operator $C_{\phi}$ is bounded on $H(E)$. It remains to consider the case \(b_2>0\). In view of the proof of Theorem \ref{thm3.3}, it is enough to verify that $\frac{(f\circ \phi)^{\#}}{E}\in H^2(\mathbb{C}_+)$
for every $f\in H(E)$. This will prove the boundedness of $C_{\phi}$ on $H(E)$. For \(f\in H(E)\), we have \(\phi^\#(z)=az+\overline b\), and hence
\[
    \frac{(f\circ\phi)^\#(z)}{E(z)}
    =\frac{f^\#(az+\overline b)}{E(0)e^{-idz}}
    =C e^{idz}f^\#(az+\overline b),
\]
where \(C\neq 0\) is a constant. Since \(f^\#\) is of exponential type at most
\(d\) and belongs to \(L^2(\mathbb R)\), the Plancherel-Polya estimate (see \cite[Theorem 6.7.1]{Boas}, \cite[Lecture 7]{Levin}) gives
\[
\int_{\mathbb R}\left|
    \frac{(f\circ\phi)^\#(x+iy)}{E(x+iy)}
\right|^2dx
\leq
\frac{C}{a}e^{-2dy}e^{2d|ay-b_2|}
\int_{\mathbb R}|f^\#(t)|^2dt .
\]
Since \(0<a\leq 1\), the factor \(e^{-2dy}e^{2d|ay-b_2|}\) is uniformly
bounded for \(y>0\). Therefore $\frac{(f\circ\phi)^\#}{E}\in H^2(\mathbb C_+).$

Next, we discuss the compactness of the operator $C_{\phi}$. 

Case 1. If $b_2> 0$, then $\phi(t_n) \notin \mathbb{R}$. Hence, $$\frac{ \vert E(\phi(t_n))\vert ^2 - \vert E(\overline{\phi(t_n)})\vert^2}{-4 i \Im(\phi(t_n))E(t_n) \Re(E'(t_n))}= \linebreak
\frac{\exp(2d b_2))- \exp(-2d b_2)}{4 d b_2}, $$
which is a positive constant.  

Case 2. If $b_2= 0$, then $\phi(t_n) \in \mathbb{R}$. Hence,$$\frac{-i \Im(E'(\phi(t_n)) \overline{E(\phi(t_n))})}{E(t_n) \Re(E'(t_n))}=1,$$
which is again a positive constant.

Thus, by Theorem \ref{Thm4.4}, the bounded operator $C_{\phi}$ is not compact on $H(E)$.
\end{example}

The following example shows that the affine characterization of bounded composition operators on Paley-Wiener spaces does not extend to arbitrary de Branges spaces.

\begin{example}
Let $N_n=2^{2^n},~\varepsilon_n=e^{-N_n},~ \lambda_n=N_n-i\varepsilon_n,~ n\in\mathbb N$.
Fix \(\tau>0\), and define
\[
         E(z)=e^{-i\tau z}
        \prod_{n=1}^{\infty}
        \left(1-\frac{z}{\lambda_n}\right)e^{h_nz},
\] where $h_n=\frac{N_n}{N_n^2+\varepsilon_n^2}.$
The function \(E\) is an entire function in the Hermite-Biehler class. Indeed
\[
        \bigg\vert \frac{E^{\#}(z)}{E(z)}\bigg\vert
        =
        \bigg\vert e^{2i\tau z}
        \prod_{n=1}^{\infty}
        \frac{1-z/\overline{\lambda_n}}{1-z/\lambda_n}\bigg\vert <1, \quad \text{for all}~ z\in\mathbb C_+.
\]
For more details on the factorization of functions in the class $\mathcal{H}B$, see \cite[Section 2.2]{Baranov}.
Now consider the affine symbol $\phi(z)=z+1$. We claim that the corresponding composition operator $C_\phi $
is not bounded on \(H(E)\).

Suppose, on the contrary, that \(C_\phi\) is bounded on \(H(E)\). Let \(K_w\)
denote the reproducing kernel of \(H(E)\) at \(w\). Then $C_\phi^*K_w=K_{\phi(w)}=K_{w+1}$. Indeed, for every \(f\in H(E)\), $
        \langle f,C_\phi^*K_w\rangle_{H(E)}
        =
        \langle C_\phi f,K_w\rangle_{H(E)}
        =
        f(w+1)
        =
        \langle f,K_{w+1}\rangle_{H(E)}$.
Therefore, if \(C_\phi\) is bounded, then there exists \(M>0\) such that $
        \|K_{w+1}\|\leq M\|K_w\|$, $ w\in\mathbb C$.
In particular,
\begin{equation}\label{4.3}
        \sup_{n\in\mathbb N}
        \frac{\|K_{N_n+1}\|}{\|K_{N_n}\|}<\infty.
\end{equation}

We shall show that this is impossible. Let \(\varphi_E\) denote the phase
function associated with \(E\) (see \cite[Problem 48]{Brange}). For real \(x\), one has
\[
        \|K_x\|^2
        =
        K_x(x)
        =
        \frac{1}{\pi}\varphi_E'(x)|E(x)|^2.
\]
For the above choice of \(E\), the derivative of phase function associated with $E$ is given by
\[
        \varphi_E'(x)
        =
        \tau+
        \sum_{k=1}^{\infty}
        \frac{\varepsilon_k}{(x-N_k)^2+\varepsilon_k^2},
        \qquad x\in\mathbb R.
\]
Hence, at \(x=N_n\), the \(n\)-th summand gives $\varphi_E'(N_n)\geq \frac{1}{\varepsilon_n}.$
On the other hand,
\[
        \varphi_E'(N_n)
        =
        \tau+\frac{1}{\varepsilon_n}
        +
        \sum_{k\neq n}
        \frac{\varepsilon_k}{(N_n-N_k)^2+\varepsilon_k^2}.
\]
Since \(N_n=2^{2^n}\), the sequence \(\{N_n\}\) is strictly increasing and
sparse. In particular, for \(k\neq n\), $|N_n-N_k|\geq 1.$
Thus $(N_n-N_k)^2+\varepsilon_k^2\geq 1,$
and hence
\[
        \sum_{k\neq n}
        \frac{\varepsilon_k}{(N_n-N_k)^2+\varepsilon_k^2}
        \leq
        \sum_{k=1}^{\infty}\varepsilon_k.
\]
Since
\[
        \sum_{k=1}^{\infty}\varepsilon_k
        =
        \sum_{k=1}^{\infty}e^{-N_k}<\infty,
\]
there exists a constant \(A>0\) such that $\varphi_E'(N_n)
        \leq
        \tau+\frac{1}{\varepsilon_n}+A.$
Since \(0<\varepsilon_n\leq 1\), we get $\tau+A\leq \frac{\tau+A}{\varepsilon_n}.$
Therefore, for some constant \(C_1>0\), $\varphi_E'(N_n)\leq \frac{C_1}{\varepsilon_n}, n\in\mathbb N.$
On the other hand, $\varphi_E'(N_n+1)\geq \tau.$
Next, we estimate the quotient \(E(N_n+1)/E(N_n)\). Since \(N_n\) is real,
we have
\[
\begin{aligned}
        \left|
        \frac{E(N_n+1)}{E(N_n)}
        \right|
        &=
        \left|
        \frac{e^{-i\tau(N_n+1)}}{e^{-i\tau N_n}}
        \right|
        \prod_{k=1}^{\infty}
        \left|
        \frac{1-\frac{N_n+1}{\lambda_k}}
        {1-\frac{N_n}{\lambda_k}}
        \right|
        e^{h_k}  \\
        &=
        e^{\sum_{k=1}^{\infty}h_k}
        \prod_{k=1}^{\infty}
        \left|
        \frac{N_n+1-\lambda_k}{N_n-\lambda_k}
        \right|.
\end{aligned}
\]
The exponential factor \(e^{\sum h_k}\) is a positive constant independent of \(n\), because $\sum_{k=1}^{\infty}h_k<\infty$.
We now estimate the product. The \(k=n\) factor is
\[
        \left|
        \frac{N_n+1-\lambda_n}{N_n-\lambda_n}
        \right|
        =
        \left|
        \frac{1+i\varepsilon_n}{i\varepsilon_n}
        \right|
        =
        \frac{\sqrt{1+\varepsilon_n^2}}{\varepsilon_n}
        \geq
        \frac{1}{\varepsilon_n}.
\]

For \(k<n\), put \(d_{k,n}=N_n-N_k>0\). Then $N_n+1-\lambda_k = d_{k,n}+1+i\varepsilon_k,$
and $N_n-\lambda_k=d_{k,n}+i\varepsilon_k.$
Therefore
\[
        \left|
        \frac{N_n+1-\lambda_k}{N_n-\lambda_k}
        \right|
        =
        \frac{\sqrt{(d_{k,n}+1)^2+\varepsilon_k^2}}
        {\sqrt{d_{k,n}^2+\varepsilon_k^2}}
        \geq 1.
\]

For \(k>n\), put \(d_{n,k}=N_k-N_n>0\). Then $N_n+1-\lambda_k=-(d_{n,k}-1)+i\varepsilon_k,$
and $ N_n-\lambda_k= -d_{n,k}+i\varepsilon_k.$
Thus
\[
        \left|
        \frac{N_n+1-\lambda_k}{N_n-\lambda_k}
        \right|
        =
        \frac{\sqrt{(d_{n,k}-1)^2+\varepsilon_k^2}}
        {\sqrt{d_{n,k}^2+\varepsilon_k^2}}.
\]
Since \(d_{n,k}>1\) and \(0<\varepsilon_k<1\), we have $\sqrt{(d_{n,k}-1)^2+\varepsilon_k^2}
        \geq d_{n,k}-1$
and $\sqrt{d_{n,k}^2+\varepsilon_k^2}
        \leq d_{n,k}+1.$
Hence
\[
        \left|
        \frac{N_n+1-\lambda_k}{N_n-\lambda_k}
        \right|
        \geq
        \frac{d_{n,k}-1}{d_{n,k}+1}
        =
        1-\frac{2}{d_{n,k}+1}
        \geq
        1-\frac{2}{d_{n,k}}.
\]
Since \(N_k=2^{2^k}\), for \(k>n\) we have $N_k\geq N_{n+1}=N_n^2.$
Thus, for all \(n\) large enough, $d_{n,k}=N_k-N_n\geq \frac{N_k}{2}.$
Consequently, $\frac{2}{d_{n,k}}\leq \frac{4}{N_k}.$
Therefore
\[
        \left|
        \frac{N_n+1-\lambda_k}{N_n-\lambda_k}
        \right|
        \geq
        1-\frac{4}{N_k}.
\]
Since $\sum_{k=1}^{\infty}\frac{1}{N_k}<\infty,$
the infinite product $\prod_{k=2}^{\infty}\left(1-\frac{4}{N_k}\right)$
converges to a positive number. Hence there exists a constant
\(\delta>0\), independent of \(n\), such that
\[
        \prod_{k>n}
        \left|
        \frac{N_n+1-\lambda_k}{N_n-\lambda_k}
        \right|
        \geq \delta.
\]
Combining the estimates for \(k<n\), \(k=n\), and \(k>n\), we obtain a
constant \(c>0\), independent of \(n\), such that
\[
        \left|
        \frac{E(N_n+1)}{E(N_n)}
        \right|
        \geq
        \frac{c}{\varepsilon_n},
        \qquad n\in\mathbb N.
\]

Consequently,
\[
\begin{aligned}
        \frac{\|K_{N_n+1}\|^2}{\|K_{N_n}\|^2}
        &=
        \frac{\varphi_E'(N_n+1)}{\varphi_E'(N_n)}
        \left|
        \frac{E(N_n+1)}{E(N_n)}
        \right|^2   \\
        &\geq
        \frac{\tau}{C_1/\varepsilon_n}
        \frac{c^2}{\varepsilon_n^2}
        =
        \frac{C_2}{\varepsilon_n},
\end{aligned}
\]
where \(C_2>0\) is independent of \(n\). Since $
        \varepsilon_n=e^{-N_n}\to 0$,
we obtain
\[
        \frac{\|K_{N_n+1}\|}{\|K_{N_n}\|}
        \longrightarrow \infty.
\]
This contradicts \eqref{4.3}. Therefore \(C_\phi\) is not bounded on \(H(E)\).

\end{example}

\begin{rmk}
Chacon and Gimenez proved in \cite[Theorem~2.4]{Chacon} that, for a nonconstant entire function \(\phi\), the composition operator \(C_\phi\) is bounded on a Paley-Wiener space if and only if $\phi(z)=az+b,~ a\in\mathbb R,~0<|a|\leq 1$. The preceding example shows that this characterization does not extend to arbitrary de Branges spaces which are not Paley-Wiener spaces. Indeed, in the above example the symbol \(\phi(z)=z+1\) is affine with \(a=1\), but the corresponding composition operator \(C_\phi\) is not bounded on \(H(E)\).
\end{rmk}

In the next example, we illustrate that compact composition operators do exist in suitable de Branges spaces.

\begin{example}
Let \(a>0\) and $E(z)=e^{-iaz}(z+i)$. If \(z=x+iy\in \mathbb C_{+}\), then
\[
\frac{|E(\overline{z})|}{|E(z)|}
=
e^{-2ay}\frac{|z-i|}{|z+i|}
<1 .
\]
Thus $E \in \mathcal{H}B$. 
Next, we show that the constant function \(\mathbf{1}\) belongs to \( H(E)\). Since \(\mathbf{1}^{\#}=\mathbf{1}\), it is enough to check that $\frac{\mathbf{1}}{E(z)}=\frac{e^{iaz}}{z+i}$ belongs to \(H^{2}(\mathbb C_{+})\). Indeed,
\[
\sup_{y>0}\int_{\mathbb R}
\left|
\frac{e^{ia(x+iy)}}{x+i(y+1)}
\right|^{2}\,dx
=
\sup_{y>0}
e^{-2ay}
\int_{\mathbb R}
\frac{dx}{x^{2}+(y+1)^{2}}
<\infty .
\]
Thus \(\mathbf{1}/E\in H^{2}(\mathbb C_{+})\). Therefore, $\mathbf{1}\in H(E)$. Now define $\phi(z)=i$, for $z\in \mathbb C$ .
Then \(\phi\) is an entire function and the corresponding composition operator is given by
\[
C_{\phi}f=f\circ \phi=f(i)\cdot \mathbf{1}, \qquad f\in  H(E).
\]
Since point evaluations are bounded in every de Branges space and since
\(\mathbf{1}\in H(E)\), it follows that \(C_{\phi}\) is a bounded operator on
\( H(E)\). Moreover, the range of \(C_{\phi}\) is a one-dimensional subspace spanned by $\mathbf{1}$. Hence \(C_{\phi}\) is a rank-one operator and therefore compact.
Thus \(C_{\phi}\) is a nonzero compact composition operator on the de Branges space \( H(E)\). 

In general, let $H(E)$ be any infinite dimensional de Branges space and $\phi(z)$ be any constant function on $\mathbb{C}$. Then the corresponding composition operator $C_{\phi}$ on $H(E)$ is compact if and only if constant function $\mathbf{1} \in H(E)$.
\end{example}

\begin{rmk}It is known from the work of Chacon and Gimenez \cite[Corollary 2.5]{Chacon} that no bounded composition operator on a Paley-Wiener space is compact. The above example shows that this phenomenon does not extend to arbitrary de Branges spaces. Indeed, on a de Branges space $H(E)$ which is not a Paley-Wiener space, constant symbols may induce compact composition operators, provided that the constant function $\mathbf{1}$ belongs to $H(E)$.
\end{rmk}

\section{Composition operators on regular de Branges spaces}\label{sec4}

Recall that the de Branges space $H(E)$ is said to be regular if $H(E)$ is closed under the generalized backward shift operator $R_{\alpha}$ for every complex number $\alpha$. First, we present some elementary results that will be used to prove the boundedness of composition operators. 




\begin{lemma}\label{thm4.4}
The de Branges space $H(E)$ of entire functions is regular if and only if $\frac{E^{-1}}{\rho_i} \in H^2(\mathbb{C}_+)$.
\end{lemma}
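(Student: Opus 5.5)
The condition $\frac{E^{-1}}{\rho_i}\in H^2(\mathbb{C}_+)$ says that $\frac{1}{(z+i)E(z)}\in H^2(\mathbb{C}_+)$, since $\rho_i(z)=-2\pi i(z+i)$. Because $\frac{1}{E}$ is analytic in $\mathbb{C}_+$ and has no real zeros in the denominator, this is essentially the integrability condition $\int_{\mathbb R}\frac{dt}{(1+t^2)|E(t)|^2}<\infty$ together with bounded type and nonpositive mean type. The plan is to show that both regularity and this condition are equivalent to the statement that every (equivalently, some) function $\frac{K_w(z)}{z-\bar w}$-type object lies in $H(E)$. More concretely, it suffices to show that regularity is equivalent to $\frac{E(z)-E(\alpha)}{z-\alpha}$ and its reflection lying in $H(E)$ for one (hence all) $\alpha$.

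\emph{Step 1 (regular implies the condition).} Pick $f\in H(E)$ and $\alpha\in\mathbb{C}_-$ with $f(\alpha)\neq 0$; such $f$ exists, for instance a reproducing kernel, since the kernels do not vanish identically. Regularity gives $R_\alpha f=\frac{f(z)-f(\alpha)}{z-\alpha}\in H(E)$. Since $\frac{f}{E(z-\alpha)}\in H^2$ (because $\frac{f}{E}\in H^2$ and $\frac{1}{z-\alpha}$ is bounded on $\mathbb{C}_+$), we get that $\frac{f(\alpha)}{(z-\alpha)E}=\frac{f}{(z-\alpha)E}-\frac{R_\alpha f}{E}\in H^2(\mathbb{C}_+)$. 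Dividing by $f(\alpha)$ and multiplying by the bounded function $\frac{z-\alpha}{z+i}$ on $\mathbb{C}_+$ yields $\frac{1}{(z+i)E}\in H^2(\mathbb{C}_+)$, hence $\frac{E^{-1}}{\rho_i}\in H^2$.

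\emph{Step 2 (the condition implies regular).} Fix $f\in H(E)$ and $\alpha\in\mathbb{C}$. The function $R_\alpha f$ is entire. Write
\[
\frac{R_\alpha f}{E}=\frac{f}{(z-\alpha)E}-f(\alpha)\frac{1}{(z-\alpha)E}.
\]
On $\{\Im z>1+|\Im\alpha|\}$ both terms are fine, using the hypothesis together with the bounded ratio $\frac{z+i}{z-\alpha}$. To handle a neighbourhood of $\alpha$, use that $R_\alpha f$ is entire and use the de Branges membership criterion stated in Section \ref{sec pre}: it suffices to check that $\frac{R_\alpha f}{E}$ and $\frac{(R_\alpha f)^\#}{E}$ are of bounded type with nonpositive mean type, and that $\int|R_\alpha f/E|^2dt<\infty$.

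The bounded type and mean type properties follow from the algebra of $N(\mathbb{C}_+)$ and $N^+$ recalled earlier, since $\frac{1}{(z-\alpha)E}=\frac{z+i}{z-\alpha}\cdot\frac{1}{(z+i)E}$ is a product of functions of bounded type. The integrability on $\mathbb{R}$ is obtained by splitting into $|t-\alpha|\ge1$, where the terms are bounded by $|f/E|/|t-\alpha|$ and $|f(\alpha)|/(|t+i||E(t)|)$ times a constant, and a compact neighbourhood of $\alpha$, where $R_\alpha f$ is continuous and $E$ has no real zeros. For the $\#$ part, note that $(R_\alpha f)^\#=R_{\bar\alpha}f^\#$ and $f^\#\in H(E)$, so the same argument applies.

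The main obstacle is the upgrade from local and boundary information to genuine $H^2$ membership: one needs nonpositive mean type and the Smirnov class property, and not merely boundary $L^2$. I would address this via the remark in Section \ref{sec pre} that a function of bounded type with nonpositive mean type and continuous extension to $\mathbb{R}$ lies in $N^+$, and then use that an $N^+$ function with $L^2$ boundary values lies in $H^2$ (Smirnov's theorem). Poles at $\alpha\in\mathbb{C}_+$ cancel because $R_\alpha f$ is entire, so $\frac{R_\alpha f}{E}$ is analytic in $\mathbb{C}_+$ with a continuous extension to the real line.
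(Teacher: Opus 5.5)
Your proof is correct. It differs from the paper mainly because the paper gives no argument of its own: it defers entirely to Lemma 3.18 of an external reference. The notation $\rho_\xi$ comes from the Arov--Dym literature. A common route there is to prove $R_\beta$-invariance at a single point and then propagate it to all $\alpha$ through the resolvent identity $R_\alpha-R_\beta=(\alpha-\beta)R_\alpha R_\beta$ and the boundedness of $R_\beta$.

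You instead verify $R_\alpha f\in H(E)$ directly for every $\alpha\in\mathbb{C}$. You use only tools the preliminaries already recall:
\begin{itemize}
\item the algebra of $N(\mathbb{C}_+)$ and of mean types;
\item the passage from bounded type, nonpositive mean type and continuous boundary extension to $N^+(\mathbb{C}_+)$;
\item Smirnov's theorem $N^+(\mathbb{C}_+)\cap L^2(\mathbb{R})=H^2(\mathbb{C}_+)$.
\end{itemize}
This makes the lemma self-contained and avoids any operator-theoretic propagation step; the citation is shorter but imports external machinery. Your necessity direction is the natural one: use one $\alpha\in\mathbb{C}_-$, then multiply by the $H^\infty$ function $\frac{z-\alpha}{z+i}$. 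You may simply take $f=K_\alpha$ there, since $K_\alpha(\alpha)=\frac{|E(\alpha)|^2-|E(\bar\alpha)|^2}{4\pi\Im\alpha}>0$ for $\alpha\in\mathbb{C}_-$.

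A few points need tightening.

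First, in the sufficiency direction you use that $E$ has no real zeros. This is not part of $E\in\mathcal{H}B$ as defined in the paper, but it follows from the hypothesis. A real zero $x_0$ of $E$ would make $|(t+i)E(t)|^{-2}$ non-integrable near $x_0$, contradicting $\frac{1}{(t+i)E(t)}\in L^2(\mathbb{R})$.

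Second, for $\alpha\in\mathbb{C}_+$ the factor $\frac{z+i}{z-\alpha}$ has a pole in $\mathbb{C}_+$. So the two terms of your decomposition are not individually in $N(\mathbb{C}_+)$. Argue instead in three steps:
\begin{itemize}
\item $\frac{1}{E}=(z+i)\cdot\frac{1}{(z+i)E}\in N(\mathbb{C}_+)$ with $\mathrm{mt}(1/E)\le 0$;
\item hence $\frac{f-f(\alpha)}{E}\in N(\mathbb{C}_+)$ has nonpositive mean type;
\item divide by $z-\alpha$, which is legitimate because the quotient is holomorphic in $\mathbb{C}_+$ and $\mathrm{mt}(z-\alpha)=0$.
\end{itemize}

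Finally, the opening reduction to $\frac{E(z)-E(\alpha)}{z-\alpha}$ and the remark about the region $\{\Im z>1+|\Im\alpha|\}$ play no role in the argument and can be deleted.
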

\begin{proof}
See Lemma 3.18 in \cite{multi}.
\end{proof}

\begin{lemma}\label{lemma4.5} 
Let $H(E)$ be a regular de Branges space. Then the following holds:
\begin{itemize}
\item[(1)] $E$ is of exponential type.
\item[(2)] $\int_{-\infty}^{\infty} \frac{\log^+ \vert E(t)\vert}{1+t^2}dt < \infty$.
\end{itemize}
Moreover, every function $f \in H(E)$ is of exponential type less than or equal to the exponential type of $E$.
\end{lemma}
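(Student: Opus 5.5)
The plan is to reduce everything to Krein's theorem (Theorem \ref{thmA}) and the mean type/exponential type relation (Theorem \ref{thmB}). By Lemma \ref{thm4.4}, regularity gives $\frac{1}{\rho_i E}\in H^2(\mathbb{C}_+)$, so $\frac{1}{E}\in N(\mathbb{C}_+)$, since $\rho_i$ is a polynomial and hence of bounded type in $\mathbb{C}_+$. Then $E=\frac{1}{1/E}$ is of bounded type in $\mathbb{C}_+$, because $E$ has no zeros in $\mathbb{C}_+$ (as $E\in\mathcal{H}B$) and quotients of bounded-type functions that are holomorphic stay in $N(\mathbb{C}_+)$. For $E^{\#}$ I would write $\frac{E^{\#}}{E}=\chi$, which is inner by Theorem \ref{thmD}. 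Thus $E^{\#}=\chi E\in N(\mathbb{C}_+)$. Both restrictions $E|_{\mathbb{C}_+}$ and $E^{\#}|_{\mathbb{C}_+}$ therefore lie in $N(\mathbb{C}_+)$. Theorem \ref{thmA} then yields at once that $E$ is of exponential type and that $\int \frac{\log^+|E(t)|}{1+t^2}\,dt<\infty$, proving (1) and (2).

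For the final assertion, let $f\in H(E)$. By definition $\frac{f}{E},\frac{f^{\#}}{E}\in H^2(\mathbb{C}_+)\subset N^+(\mathbb{C}_+)$, so both have non-positive mean type. Hence
\[
f=\frac{f}{E}\cdot E,\qquad f^{\#}=\frac{f^{\#}}{E}\cdot E
\]
are in $N(\mathbb{C}_+)$, and additivity of mean type gives $\mathrm{mt}(f)\le \mathrm{mt}(E)$ and $\mathrm{mt}(f^{\#})\le \mathrm{mt}(E)$. Applying Theorem \ref{thmA} to $f$ shows that $f$ is of exponential type (or $f\equiv 0$). Theorem \ref{thmB} then gives
\[
\mathrm{ET}(f)=\max\{\mathrm{mt}(f),\mathrm{mt}(f^{\#})\}\le \mathrm{mt}(E).
\]

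It remains to compare $\mathrm{mt}(E)$ with $\mathrm{ET}(E)=\max\{\mathrm{mt}(E),\mathrm{mt}(E^{\#})\}$, again by Theorem \ref{thmB}. Since $|E^{\#}|\le |E|$ on $\mathbb{C}_+$, we have $\mathrm{mt}(E^{\#})\le \mathrm{mt}(E)$, so $\mathrm{ET}(E)=\mathrm{mt}(E)$. This gives $\mathrm{ET}(f)\le \mathrm{ET}(E)$.

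The main obstacle is bookkeeping rather than depth. First, "exponential type" in the paper requires order one with finite positive type, so degenerate cases (type zero, or $E$ a polynomial times a constant) should be read as "exponential type at most". Second, one has to justify that division by $E$ keeps us in $N(\mathbb{C}_+)$, which uses the zero-freeness of $E$ in $\mathbb{C}_+$. I would handle the first point with a remark interpreting "exponential type" in the weak sense of at most order one and finite type.
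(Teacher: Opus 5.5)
Your proof is correct, but it is not the paper's route: the paper gives no argument for this lemma and simply cites Proposition 2 and Exercise 13 of Dym--McKean. You instead give a self-contained derivation using only tools recalled in the preliminaries:
\begin{itemize}
\item Lemma \ref{thm4.4} gives $1/E\in N(\mathbb{C}_+)$. Here you correctly use that $\rho_i$ is a polynomial, hence of bounded type, rather than an element of $H^2(\mathbb{C}_+)$.
\item Since $E$ has no zeros in $\mathbb{C}_+$ and $E^{\#}=\chi E$, both $E$ and $E^{\#}$ lie in $N(\mathbb{C}_+)$, so Theorem \ref{thmA} yields (1) and (2).
\item The factorizations $f=(f/E)E$ and $f^{\#}=(f^{\#}/E)E$, together with Theorem \ref{thmB} and $\mathrm{mt}(E^{\#})\le \mathrm{mt}(E)$, give $\mathrm{ET}(f)\le \mathrm{mt}(E)=\mathrm{ET}(E)$.
\end{itemize}
This is the standard Krein-type argument behind the cited reference. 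What your version buys is transparency: it shows exactly which preliminary facts the lemma rests on.

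Your caveat about the meaning of ``exponential type'' is necessary, not just bookkeeping. Take $E(z)=z+i$. It belongs to $\mathcal{H}B$, $H(E)$ consists of the constants, and $1/(\rho_i E)=1/(-2\pi i (z+i)^2)\in H^2(\mathbb{C}_+)$, so $H(E)$ is regular by Lemma \ref{thm4.4}. Yet $E$ has order $0$. So statement (1), and the final assertion applied to the constant functions, are false under the paper's strict definition (order one with finite positive type). The same issue affects Theorem \ref{thmA} itself, since polynomials satisfy its condition (2). Throughout, exponential type must be read in the weak sense $|f(z)|\le Ae^{B|z|}$, as you propose.
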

\begin{proof}
See Proposition 2 and Exercise 13 in \cite{DMcK}.
\end{proof}

Now, we discuss the boundedness of composition operators on regular de Branges spaces. The following two theorems are motivated by \cite{Chacon}, where the boundedness of composition operators is discussed on the Paley-Wiener spaces.

\begin{thm}\label{thm4.7}
Let $H(E)$ be a regular de Branges space and $\phi$ be a non constant entire function. If the operator $C_{\phi}$ is bounded on $H(E)$, then $\phi$ is affine.
\end{thm}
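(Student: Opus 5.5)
Following the strategy of Chacon and Gimenez \cite{Chacon}, the plan is to combine growth information from regularity with P\'olya's theorem (Theorem \ref{polya}).

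\textbf{Step 1: a nonzero element of $H(E)$ of positive exponential type.} By Lemma \ref{lemma4.5}, $E$ is of exponential type, $\int \log^+|E(t)|/(1+t^2)\,dt<\infty$, and every $f\in H(E)$ has exponential type at most $\mathrm{ET}(E)$. Regularity (Lemma \ref{thm4.4}) gives $E^{-1}/\rho_i\in H^2(\mathbb{C}_+)$. Hence $H(E)$ contains the functions $K_w$ and $R_\alpha K_w$, and so it contains nonconstant functions $f$ of finite order; at most one-dimensional spaces are excluded since $C_\phi$ is nontrivial. Take $f \in H(E)$ nonconstant. Then $f$ is of exponential type, and by Theorems \ref{thmA} and \ref{thmB}, $f/E$ is of bounded type.

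\textbf{Step 2: $\phi$ is a polynomial.} Since $C_\phi$ is bounded, $f\circ\phi\in H(E)$, so $f\circ\phi$ has order at most $1$ by Lemma \ref{lemma4.5}. Apply Theorem \ref{polya} with outer function $f$ and inner function $\phi$. Alternative (1) forces $\phi$ to be a polynomial. Alternative (2) requires $f$ to have order $0$, so we must choose some $f\in H(E)$ of order exactly $1$. I would argue this by contradiction: if every function in $H(E)$ had order $0$, then in particular every reproducing kernel $K_w$, and hence $E$ itself up to the relation $K_w\overline{E(w)}$-combinations, would be of zero exponential type. In that case the whole space consists of order-zero functions, and a separate argument handles it.

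\textbf{Step 3: the degree is at most one.} This is the real argument. Let $\phi$ have degree $n\ge 2$ and take $f\in H(E)$ with $\mathrm{ET}(f)=\tau>0$. Then $f\circ\phi$ has order $n\cdot\mathrm{ord}(f)=n>1$ whenever $f$ has order $1$, contradicting Lemma \ref{lemma4.5}.

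It remains to handle a space in which all functions have order $<1$, or exponential type $0$. The cleanest route is to show that a regular $H(E)$ always contains a function of genuine order $1$ or of growth comparable to $E$. I expect this to be the main obstacle. One approach is to use that $f/E$ and $f^{\#}/E$ have nonpositive mean type, together with $f(x)/E(x)\in L^2$. If $E$ has order $<1$, or order $1$ with type $0$, I would instead compare growth along the real axis. If $f$ has order $\rho<1$ and degree $n\ge 2$, then on $\mathbb{R}$ the function $|f(\phi(x))|$ grows like $|f|$ evaluated at points of size $|x|^n$. This should violate $\int\log^+|f\circ\phi(t)|/(1+t^2)\,dt<\infty$, provided $f$ is not a polynomial; a polynomial $f$ can be excluded using the weight $|E|$.

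Concretely, I would test boundedness on the kernels via $\|K_{\phi(x)}\|\le \|C_\phi\|\,\|K_x\|$ for real $x$, with $\|K_x\|^2=\varphi_E'(x)|E(x)|^2/\pi$. Since $\phi(x)$ is of size $|x|^n$, this gives $\log|E(\phi(x))|\lesssim \log|E(x)|+O(\log|x|)$. I expect this inequality to be incompatible with the growth of a nonpolynomial $E$ of exponential type satisfying the Krein condition when $n\ge 2$. Handling this case cleanly is where I expect most of the work.
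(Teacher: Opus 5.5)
Your route is the paper's: P\'olya's theorem (Theorem \ref{polya}) makes $\phi$ a polynomial, and then $\mathrm{ord}(f\circ\phi)=\deg\phi\cdot\mathrm{ord}(f)$ gives the degree bound, as in Chacon--Gimenez. The paper simply asserts that $K_\alpha$ and $K_\alpha\circ\phi$ are ``of exponential type and hence of order $1$''. Lemma \ref{lemma4.5} only gives $|E(z)|\le Ce^{A|z|}$, which allows type $0$ and order $<1$, so you have found the real weak point.

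Your proposal does not get past it, however. No element of order exactly $1$ is ever produced, and Step 2 defers to ``a separate argument''. The last two paragraphs are heuristics: $\|K_x\|^2=\varphi_E'(x)|E(x)|^2/\pi$ is used at $\phi(x)$, which is real only when $\phi$ has real coefficients, and the claimed incompatibility is never proved. Also, one-dimensional spaces are not ``excluded''. For $E(z)=z+i$ the space $H(E)$ is the constants, it is regular by Lemma \ref{thm4.4}, and $C_\phi=I$ for every entire $\phi$, e.g.\ $\phi(z)=z^2$. So a hypothesis such as $\dim H(E)\ge 2$ is needed.

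When $\rho:=\mathrm{ord}(E)>0$, the gap closes if you compare with $\rho$ instead of with $1$.
\begin{itemize}
\item Since $K_z(z)=(|E(z)|^2-|E(\bar z)|^2)/(4\pi\Im z)$, the mean value theorem in $\Im z$ gives $K_z(z)\le \pi^{-1}M_E(|z|)M_{E'}(|z|)$. Then $|f(z)|^2\le\|f\|^2K_z(z)$ shows every $f\in H(E)$ has order at most $\rho$.
\item $H(E)$ contains some $g$ of order exactly $\rho$. Take $g(z)=E(z)/(z-\lambda)$ for a zero $\lambda\in\mathbb{C}_-$ of $E$. If $E$ has no zeros, Lemma \ref{lemma4.5} and $E\in\mathcal{H}B$ force $E=Ce^{-i\tau z}$ with $\tau>0$; then take $g=K_w$ with $w\in\mathbb{C}_+$. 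Indeed $|K_w(iy)|\ge(|E(w)|-|E(\bar w)|)\,|E(iy)|/|\rho_w(iy)|$ gives $\mathrm{mt}(K_w)\ge\tau$.
\item Theorem \ref{polya} applied to $g$ makes $\phi$ a polynomial, and $\deg\phi\cdot\rho=\mathrm{ord}(g\circ\phi)\le\rho$ gives $\deg\phi\le 1$.
\end{itemize}

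When $\rho=0$, the order argument fails already at Step 2. For $E(z)=\prod_{k\ge1}\bigl(1-z/(2^k-i)\bigr)$, the space $H(E)$ is regular and infinite-dimensional, yet all its elements have order $0$. So alternative (2) of Theorem \ref{polya} cannot be ruled out. That case needs a genuinely different growth comparison, for instance of $\log|K_w\circ\phi|$ against $\log M_E$, using the minimum-modulus behaviour of functions of order zero. Your sketch does not supply one.
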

\begin{proof}
Since, $C_{\phi}$ is bounded on $H(E)$, the functions of the form $K_{\alpha}\circ \phi$ are in $H(E)$ where $K_{\alpha}$ are the reproducing kernels of $H(E)$. Here, $H(E)$ is regular, so by Lemma \ref{lemma4.5}, $K_{\alpha}$ and $K_{\alpha}\circ \phi$ are of exponential type and hence of order $1$. Now, by Theorem \ref{polya}, we get that $\phi$ is a polynomial. Now, following the same proof technique as in \cite[Lemma 2.3]{Chacon}, we see that $\phi$ is affine.
\end{proof}

\begin{thm}\label{thm4.8}
Let $H(E)$ be a regular de Branges space. If $\phi(z)= az+b$, where $0< a \leq 1$, $b=b_1+ib_2$, $b_1 \in \mathbb{R}, b_2 \geq 0$, and $\frac{E(az+b)}{E(z)} \in L^{\infty}(\mathbb{R})$, then the operator $C_{\phi}$ is bounded on $H(E)$. 
\end{thm}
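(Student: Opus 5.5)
To prove Theorem \ref{thm4.8} we show that $f\circ\phi\in H(E)$ for every $f\in H(E)$; the closed graph theorem, as recalled at the beginning of Section \ref{sec3}, then gives boundedness. We use the membership criterion recalled in Section \ref{sec pre}. An entire $g$ lies in $H(E)$ if and only if $g/E$ and $g^{\#}/E$ are of bounded type with non-positive mean type in $\mathbb{C}_+$, and $g/E\in L^2(\mathbb{R})$. We take $g=f\circ\phi$, so $g(z)=f(az+b)$ and $g^{\#}(z)=f^{\#}(az+\bar b)$.

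\emph{Step 1: the $L^2$ condition on the line.} For real $t$ we write
\[
\frac{f(at+b)}{E(t)}=\frac{E(at+b)}{E(t)}\cdot\frac{f(at+b)}{E(at+b)}.
\]
The first factor is bounded by hypothesis. The second factor is the restriction of $f/E$ to the horizontal line $\Im w=b_2\ge 0$, rescaled by $a$. Since $f/E\in H^2(\mathbb{C}_+)$, its $L^2$ norms on horizontal lines are uniformly bounded, by the defining supremum when $b_2>0$ and by the boundary values when $b_2=0$. The change of variables $s=at$ contributes the factor $a^{-1}$. Hence $\int_{\mathbb R}|g(t)/E(t)|^2dt\le a^{-1}\|E(a\cdot+b)/E\|_\infty^2\|f\|^2_{H(E)}$.

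\emph{Step 2: the bounded type and mean type of $g/E$.} By Lemma \ref{lemma4.5}, $E$ and $f$ are of exponential type and satisfy the logarithmic integral condition. Hence $f\circ\phi$ is of exponential type and $E$ is of Cartwright class. The quotient $g/E$ is then a ratio of functions in $N(\mathbb{C}_+)$ (Theorem \ref{thmA} applied to $E$; for $g$, we use that $f/E\in H^2$ composed with $\phi$ is in $N$, times $E\circ\phi\in N$). It is holomorphic in $\mathbb{C}_+$ because $E$ has no zeros there, so $g/E\in N(\mathbb{C}_+)$. For the mean type, we use the factorization
\[
\frac{g}{E}=\frac{E\circ\phi}{E}\cdot\frac{f}{E}\circ\phi .
\]
Along $iy$ we have $\phi(iy)=b+iay$. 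Since $f/E\in H^2$, it has non-positive mean type, so $\limsup_y \frac1y\log|(f/E)(b+iay)|\le 0$. For $(E\circ\phi)/E$ we use that it is of bounded type, bounded on $\mathbb{R}$, and has a continuous extension there. The claim to establish is that its mean type is non-positive, which by the remark in Section \ref{sec pre} places it in $N^+$. Then a Smirnov-class function with $L^\infty$ boundary values is in $H^\infty$, and $g/E\in N^+$ with $L^2$ boundary values, hence in $H^2$.

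\emph{Step 3: the same for $g^{\#}/E$.} Here the factorization is $g^{\#}/E=\bigl(E(az+\bar b)/E(z)\bigr)\cdot (f^{\#}/E)(az+\bar b)$. The point $az+\bar b$ may lie in $\mathbb{C}_-$ when $b_2>0$, so instead we handle the $L^2$ estimate as in Examples \ref{example3.6} and \ref{example3.7}. We view $f^{\#}/E^{\#}$ on the line $\Im w=-b_2$, and use that $E^{\#}/E$ is inner, so $|E^{\#}|\le|E|$ on $\mathbb{C}_+$. We also write $|E(at+\bar b)|=|E^{\#}(at+b)|$, and the real-line bound $|E^{\#}(at+b)|$ versus $|E(t)|$ must be obtained from the hypothesis.

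The main obstacle is the mean-type computation in Steps 2 and 3. Regularity is needed here: Theorem \ref{thmB} gives $\mathrm{mt}(E\circ\phi)=a\,\mathrm{mt}(E)$, and the condition $a\le 1$ together with the sign of the mean type of $E^{-1}$ should yield $\mathrm{mt}\bigl((E\circ\phi)/E\bigr)\le 0$. Likewise, the lower-half-plane shift by $\bar b$ in Step 3 has to be controlled by a Plancherel–Pólya type estimate, using that $f^{\#}$ has exponential type at most $\mathrm{ET}(E)$ by Lemma \ref{lemma4.5}.
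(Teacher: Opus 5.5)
Your architecture matches the paper's. You show $(E\circ\phi)/E\in N^+(\mathbb{C}_+)$, upgrade it to $H^\infty(\mathbb{C}_+)$ using the $L^\infty(\mathbb{R})$ hypothesis, and then place $g/E$ and $g^{\#}/E$ in $N^+\cap L^2(\mathbb{R})=H^2(\mathbb{C}_+)$. \textbf{The gap is that the proposal stops at the decisive step.} The inequality $\mathrm{mt}\bigl((E\circ\phi)/E\bigr)\le 0$ is only announced (``the claim to establish'', ``should yield''). This is the only place where regularity and $a\le 1$ enter, so as written the theorem is not proved. The missing argument is short:
\begin{itemize}
\item By Lemma \ref{thm4.4}, $E^{-1}/\rho_i\in H^2\subset N^+$, and $\rho_i\in N^+$. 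Hence $\mathrm{mt}(1/E)\le 0$, i.e.\ $\mathrm{mt}(E)\ge 0$.
\item By Lemma \ref{lemma4.5} and Theorem \ref{thmA}, $E\in N(\mathbb{C}_+)$, so $E\circ\phi\in N(\mathbb{C}_+)$.
\item Evaluating along the vertical ray $b+iay$ gives $\mathrm{mt}(E\circ\phi)=a\,\mathrm{mt}(E)$. This comes from the definition of mean type, not from Theorem \ref{thmB} as you suggest.
\item Therefore $\mathrm{mt}\bigl((E\circ\phi)/E\bigr)=(a-1)\mathrm{mt}(E)\le 0$.
\end{itemize}
You also assert that the quotient is continuous on $\mathbb{R}$ without justification. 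This needs $E$ to have no real zeros, which follows from Lemma \ref{thm4.4}: $1/\bigl(E(t)(t+i)\bigr)$ must be square integrable on $\mathbb{R}$.

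Step 3 is left unfinished, and the route you suggest does not work in general. A Plancherel--P\'olya estimate for $f^{\#}$ needs $f^{\#}\in L^2(\mathbb{R})$. That holds in Examples \ref{example3.6} and \ref{example3.7} only because $E$ is an exponential there; in a general regular space only $f^{\#}/E$ is square integrable. Your bookkeeping is also off: the factor you would need is $|E^{\#}(at+\bar b)|=|E(at+b)|$, not $|E(at+\bar b)|=|E^{\#}(at+b)|$.

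In fact no estimate is needed for the $L^2$ part. For real $t$ one has $g^{\#}(t)=\overline{g(t)}$, so $\int_{\mathbb{R}}|g^{\#}/E|^2=\int_{\mathbb{R}}|g/E|^2$, which your Step 1 already bounds. This is what the paper does.

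For the Nevanlinna part, argue as follows:
\begin{itemize}
\item $f^{\#}$ is an entire function of Cartwright class. Hence $f^{\#}(az+\bar b)\in N(\mathbb{C}_+)$, even though $az+\bar b$ may leave $\mathbb{C}_+$.
\item Since $f^{\#}/E\in H^2$, we have $\mathrm{mt}(f^{\#})\le \mathrm{mt}(E)$.
\item Therefore $\mathrm{mt}(g^{\#}/E)=a\,\mathrm{mt}(f^{\#})-\mathrm{mt}(E)\le (a-1)\mathrm{mt}(E)\le 0$.
\end{itemize}
Together with continuity on $\mathbb{R}$, this gives $g^{\#}/E\in N^+\cap L^2(\mathbb{R})=H^2(\mathbb{C}_+)$.
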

\begin{proof}
Observe that $\phi$ is an entire function such that $\phi(\mathbb{C}_+) \subseteq \mathbb{C}_+$. Moreover,
$
\sup_{z \in \mathbb{C}_+} \frac{\Im(z)}{\Im(\phi(z))} = \frac{1}{a}.
$
Since the space $H(E)$ is regular, it follows from Lemma~\ref{thm4.4} that
$
\frac{E^{-1}}{\rho_i} \in H^{2}(\mathbb{C}_+) \subset N^{+}(\mathbb{C}_+),
$
and also $\rho_i \in H^{2}(\mathbb{C}_+) \subset N^{+}(\mathbb{C}_+)$.
Hence,
$
\frac{E^{-1}}{\rho_i}\,\rho_i \in N^{+}(\mathbb{C}_+),
$
which implies that $\mathrm{mt}(1/E) \leq 0$.
Since $H(E)$ is regular, Theorem \ref{thmA} together with Lemma \ref{lemma4.5} implies that
$E \in N(\mathbb{C}_+)$. In particular, $E \in N(\mathbb{C}_+)$ yields $E(az+b) \in N(\mathbb{C}_+)$.
Moreover, since $\lvert E^{\#} / E \rvert < 1$, we obtain
$
\frac{E^{\#}}{E} \in H^{\infty}(\mathbb{C}_+) \subset N(\mathbb{C}_+),
$
and therefore $\mathrm{mt}(E^{\#} / E) \leq 0$. Consequently,
$
\mathrm{mt}(E^{\#}) \leq - \mathrm{mt}(1/E) = \mathrm{mt}(E).
$
By Theorem \ref{thmB}, the mean type of $E$ is equal to the exponential type of $E$.
Since both $E(az+b)$ and $1/E(z)$ belong to $N(\mathbb{C}_+)$, it follows that
$
\frac{E(az+b)}{E(z)} \in N(\mathbb{C}_+).
$
Furthermore,
\begin{align*}
\mathrm{mt}\!\left(\frac{E(az+b)}{E(z)}\right)
&= \mathrm{mt}(E(az+b)) + \mathrm{mt}\!\left(\frac{1}{E(z)}\right) \\
&= |a|\,\mathrm{mt}(E) - \mathrm{mt}(E) \\
&= (|a|-1)\,\mathrm{mt}(E) \leq 0.
\end{align*}
By Lemma \ref{lemma4.5}, $E(z)$ has no zeros on the real line. Hence, $\frac{E(az+b)}{E(z)}$ has a continuous extension to the real line. Thus,
$
\frac{E(az+b)}{E(z)} \in N^{+}(\mathbb{C}_+).
$
Since it is assumed that
$
\frac{E(az+b)}{E(z)} \in L^{\infty}(\mathbb{R}),
$
and since $N^{+}(\mathbb{C}_+) \cap L^{\infty}(\mathbb{R}) = H^{\infty}(\mathbb{C}_+)$, we conclude that
$
\frac{E \circ \phi}{E} \in H^{\infty}(\mathbb{C}_+).
$
Thus, in the case $b_2=0$, the operator $C_{\phi}$ is bounded on $H(E)$ by Theorem \ref{thm3.3}. Next, we consider the case when $b_2>0$. By the proof of Theorem \ref{thm3.3}, it is sufficient to show that $\frac{(f \circ \phi)^{\#}}{E} \in H^2(\mathbb{C}_+)$ for every $f \in H(E)$. Since \(a\in\mathbb R\), we have
\(\phi^\#(z)=az+\overline b\), and hence $
    \frac{(f\circ\phi)^\#(z)}{E(z)}
    =
    \frac{f^\#(az+\overline b)}{E(z)}.$
For \(t\in\mathbb R\), we have $
    \left|\frac{f^\#(at+\overline b)}{E(t)}\right|
    =
    \left|\frac{f(at+b)}{E(t)}\right|.$
Therefore, by Remark \ref{rmk 5.5} given below, we get that
\[
    \int_{\mathbb R}
    \left|
        \frac{f^\#(at+\overline b)}{E(t)}
    \right|^2dt
    <\infty .
\]
Moreover, by regularity of \(H(E)\), Lemma \ref{lemma4.5} and the same argument used above, we obtain $
    \frac{f^\#(az+\overline b)}{E(z)}\in N^+(\mathbb C_+).$
Thus $
    \frac{(f\circ\phi)^\#}{E}
    \in N^+(\mathbb C_+)\cap L^2(\mathbb R)
    =
    H^2(\mathbb C_+).$
\end{proof}

\begin{rmk}\label{rmk 5.5}
For the space $H(E)$ and the function $\phi(z)$ as defined in the above theorem, note that for all $f \in H(E)$, 
\begin{align*}
\Vert C_{\phi}f \Vert^2 &= \int_{-\infty}^{\infty}\bigg\vert \frac{f \circ \phi}{E}(t)\bigg\vert^2 dt\\
&= \int_{-\infty}^{\infty}\bigg\vert \frac{f (at+b)}{E(t)}\bigg\vert^2 dt\\
&= \int_{-\infty}^{\infty}\bigg\vert \frac{f (at+b)}{E(at+b)}\bigg\vert^2 \bigg\vert \frac{E(at+b)}{E(t)}\bigg\vert^2 dt\\
&\leq \alpha^2 \int_{-\infty}^{\infty}\bigg\vert \frac{f (at+b_1+ib_2)}{E(at+b_1+ib_2)}\bigg\vert^2 dt\\
&\leq \frac{\alpha^2}{a} \int_{-\infty}^{\infty}\bigg\vert \frac{f (x+ib_2)}{E(x+ib_2)}\bigg\vert^2 dx\\
&\leq \frac{\alpha^2}{a}\exp(2 \sigma b_2) \int_{-\infty}^{\infty}\bigg\vert \frac{f (x)}{E(x)}\bigg\vert^2 dx\\
&= \frac{\alpha^2}{a}\exp(2 \sigma b_2) \Vert f \Vert^2,
\end{align*}
which implies that $$\Vert C_{\phi} \Vert \leq \frac{\alpha}{\sqrt{a}}\exp( \sigma b_2),$$ where $\alpha= \sup_{t \in \mathbb{R}}\big\vert \frac{E(at+b)}{E(t)}\big\vert$, $\sigma$ is the exponential type of $E$, and the last inequality follows by the Plancherel-Polya theorem (see \cite[Lecture 7]{Levin}).
\end{rmk}

\section{Concluding remarks}

In this paper, we have studied composition operators associated with de Branges
spaces of entire functions. The first part of the paper is devoted to composition
operators from model spaces in the upper half-plane into the Hardy Hilbert space
\(H^2(\mathbb C_+)\). For an inner function \(\chi\), we obtained a necessary
condition for the boundedness of
\[
        C_\phi:H(\chi)\to H^2(\mathbb C_+)
\]
in terms of the reproducing kernels of \(H(\chi)\) and \(H^2(\mathbb C_+)\).
We also obtained a sufficient condition in terms of the finite angular derivative
of the symbol \(\phi\) at infinity. Under an additional natural hypothesis on
\(\chi\circ\phi\), these conditions give a boundedness characterization. We also
proved a non-compactness result for such bounded composition operators and
illustrated the result by examples involving meromorphic inner functions in the
upper half-plane.

Using the unitary correspondence between a de Branges space \(H(E)\) and the
model space \(H(\chi)\), where \(\chi=E^\#/E\), we then transferred the
model-space results to the setting of de Branges spaces. This allowed us to
obtain sufficient conditions for the boundedness of composition operators on
\(H(E)\) in terms of the multiplier-type condition
\[
        \frac{E\circ\phi}{E}\in H^\infty(\mathbb C_+)
\]
together with the finite angular derivative condition for \(\phi\). We also
derived a necessary condition for boundedness involving the Hermite-\-Biehler
function \(E\), the associated inner function \(E^\#/E\), and the reproducing
kernels of the corresponding de Branges space. Thus, the boundedness problem
for \(C_\phi\) on \(H(E)\) is related not only to the growth of the symbol
\(\phi\), but also to the interaction between \(\phi\) and the structural
function \(E\).

We also studied compactness of composition operators on de Branges spaces.
Using the orthonormal basis of reproducing kernels associated with the real
zeros of the function
\[
        A(z)=\frac{E(z)+E^\#(z)}{2},
\]
we obtained a sufficient condition which guarantees that a bounded composition
operator on \(H(E)\) is not compact. Several examples were provided to show how
these criteria apply. In particular, we showed that bounded affine symbols may
give rise to non-compact composition operators on suitable de Branges spaces.
We also exhibited examples showing that the general de Branges setting differs
substantially from the Paley-Wiener setting: an affine symbol need not always
induce a bounded composition operator on an arbitrary de Branges space, while
compact composition operators may exist in suitable non-Paley-Wiener
de Branges spaces.

Finally, for regular de Branges spaces, we obtained an affine rigidity theorem.
More precisely, if \(H(E)\) is regular and \(C_\phi\) is bounded for a
non-constant entire symbol \(\phi\), then \(\phi\) must be affine. This extends
the affine-symbol phenomenon known for Paley-Wiener spaces to the class of
regular de Branges spaces. At the same time, the examples in the paper show that
the larger class of de Branges spaces is more flexible than the Paley-Wiener
class. Therefore, the results obtained here should be viewed as a first step
towards understanding how the geometry and zero distribution of the
Hermite-Biehler function \(E\) influence composition operators on \(H(E)\).

The motivation for this study comes from the fact that de Branges spaces form
a natural and important class of Hilbert spaces of entire functions, with
connections to spectral theory, differential equations and prediction theory.
Since Paley-Wiener spaces are particular examples of de Branges spaces, it is
natural to ask which properties of composition operators on Paley-Wiener spaces
remain true in the larger de Branges setting. Our results show that the affine
rigidity phenomenon persists for regular de Branges spaces. However, the general
de Branges setting is not merely a repetition of the Paley-Wiener case. In
particular, an affine symbol which behaves well in the Paley-Wiener setting
need not induce a bounded composition operator on an arbitrary de Branges space.
Moreover, while bounded composition operators on Paley-Wiener spaces are not
compact, compact composition operators may exist on suitable non-Paley-Wiener
de Branges spaces.

The present work does not give a complete characterization of boundedness and
compactness of \(C_\phi\) on an arbitrary de Branges space \(H(E)\). In particular, the gap between the necessary and sufficient conditions
obtained in the model-space formulation is not completely closed in full
generality. The results
obtained here indicate that the problem is governed not only by the growth of
the symbol \(\phi\), but also by the finer structure of the Hermite-Biehler
function \(E\), the distribution of its zeros, the associated phase function,
and the embedding properties of the corresponding model space. Thus, although
the affine form of the symbol is forced in the regular case, the general
boundedness and compactness problems for de Branges spaces remain open.

We conclude by recording some natural problems and directions for further
investigation. Several of these directions were kindly suggested to us by one of the
referees, and we are grateful for these valuable suggestions.

\begin{itemize}
    \item It would be interesting to characterize the boundedness of
    \(C_\phi\) on \(H(E)\) directly in terms of the zeros of the
    Hermite-Biehler function \(E\). Equivalently, one may ask whether the
    boundedness of \(C_\phi\) can be described in terms of the phase function
    associated with \(E\) or in terms of the growth of the reproducing kernels
    of \(H(E)\).

    \item A second natural problem is to characterize compactness of
    composition operators on de Branges spaces. The examples in this paper
    show that compactness phenomena in general de Branges spaces are different
    from those in Paley-Wiener spaces. It would therefore be useful to
    determine which structural properties of \(E\) allow compact composition
    operators to exist.

    \item Another direction is to study boundedness and compactness through
    Carleson measure methods. If \(\phi:\mathbb C_+\to\mathbb C_+\) is
    analytic and \(\mu_\phi\) is the pull-back measure induced by the
    boundary values of \(\phi\), then the boundedness of
    \[
        C_\phi:H(\chi)\to H^2(\mathbb C_+)
    \]
    is equivalent to the boundedness of the natural embedding
    \[
        \mathcal J_{\mu_\phi}:H(\chi)\to L^2(\mu_\phi).
    \]
    Equivalently, \(\mu_\phi\) is a Carleson measure for the model space
    \(H(\chi)\). In future work, we plan to develop this point of view further
    in order to obtain boundedness and compactness characterizations for
    composition operators on de Branges spaces in terms of Carleson and
    vanishing Carleson measures.

    \item Since model spaces and Hardy spaces in the upper half-plane are
    isometrically equivalent to their counterparts on the unit disc via the
    Cayley transform, it would be worthwhile to compare the present results
    with the corresponding disc formulation.

    \item It would also be interesting to investigate whether
    Aleksandrov-Clark measures associated with the inner function
    \(E^\#/E\) provide a more intrinsic description of the boundedness of
    composition operators on de Branges spaces.

    \item Finally, one may study related operators, such as weighted
    composition operators and Hausdorff operators, in the setting of
    de Branges spaces.
\end{itemize}

In particular, the following problem remains open.

\begin{prob}
Let \(E\in HB\) and let \(\phi\) be an entire function. Characterize, in terms
of the Hermite-Biehler function \(E\), the zero distribution of \(E\), the
phase function of \(E\), and the pull-back measure associated with \(\phi\),
when the composition operator \(C_\phi\) is bounded, respectively compact, on
\(H(E)\).
\end{prob}

It would also be natural to determine whether suitable non-regular de Branges
spaces admit bounded composition operators induced by non-constant non-affine
entire symbols, or whether additional structural assumptions on \(E\) force
every bounded composition operator to have an affine symbol.

\noindent

\vspace{.2in}

\noindent \textbf{Acknowledgements:}
The authors gratefully acknowledge the anonymous referees for providing several valuable improvements and suggestions on the earlier version of the manuscript, which have significantly contributed to the development of the revised version.
\vspace{.3cm}

\noindent \textbf{Funding:}
The research of the first author is supported by the FIST program of the Department of Science and Technology, Government of India, Reference No. SR/FST/MS-I/2018/22(C). The research of the second author is supported by the Post-doctoral fellowship funded under DAE plan project RIN 4001 (NISER Bhubaneswar). The research of the third author is supported 
by the MATRICS grant of SERB (MTR/2023/001324).

\vspace{.3cm}

\noindent \textbf{Author Contributions:} All authors contributed equally towards the paper.

\vspace{.3cm}

\noindent\textbf{Data availability:}\\
No data was used for the research described in the article.\\


 \noindent\textbf{Declarations:}
\vspace{.3cm}

\noindent\textbf{Conflict of interest:}\\
The authors declare that they have no conflict of interest.

\end{document}